\documentclass[12pt]{article}
\usepackage[T2A]{fontenc}
\usepackage[utf8]{inputenc}
\usepackage{amssymb,amsmath,amsfonts,amsthm,amscd,latexsym,verbatim,graphics,epsfig,indentfirst,xcolor,array}
\usepackage{geometry}
\theoremstyle{plain}
\newtheorem{theorem}{Theorem}[section]
\newtheorem{lemma}[theorem]{Lemma}
\newtheorem{corollary}[theorem]{Corollary}

\theoremstyle{remark}
\newtheorem{example}[theorem]{Example}

\usepackage{hyperref}

\begin{document}

\begin{center}
{\Large The K\"othe Radical in Novikov Algebras}

\smallskip

Alexander S. Panasenko
\end{center}

\begin{abstract}
We study the K\"othe radical (the upper nil radical) in the variety of Novikov algebras. We prove that the class of weakly right nilpotent Novikov algebras coincides with the class of r-nil algebras and forms a radical class. We establish that the K\"othe radical is a hereditary radical in the variety of Novikov algebras. Furthermore, we obtain a structural description of the K\"othe radical as the intersection of ideals whose corresponding quotient algebras are prime $\mathcal{K}$-semisimple algebras. Finally, we construct an example of a prime r-nil Novikov algebra over a field of characteristic $p>0$, which implies that the Baer radical is strictly contained in the K\"othe radical in this case.

{\it Keywords}:
Novikov algebra, K\"othe radical, upper nil radical, Baer radical, weakly right nilpotent algebra, weakly solvable algebra, r-nil algebra.
\end{abstract}

\section{Introduction}

An algebra $A$ is called a \textit{Novikov algebra} if it satisfies the following identities:
\[(x,y,z)=(y,x,z), \qquad (xy)z=(xz)y,\]
where $(x,y,z)=(xy)z-x(yz)$ is the associator. The first identity expresses left symmetry, while the second expresses right commutativity. 

Novikov algebras first appeared in the work of I.~M.~Gel'fand and I.~Ya.~Dorfman~\cite{GD1979} in the context of Hamiltonian operators in formal variational calculus. Independently, they arose in the works of A.~A.~Balinsky and S.~P.~Novikov~\cite{BN1985} (see also~\cite{N1985,B1987}) in the study of local translation-invariant Lie algebras.

The structural theory of finite-dimensional Novikov algebras was initiated by E.~I.~Zelmanov~\cite{Z1987}, who described simple and semisimple finite-dimensional Novikov algebras over an algebraically closed field of characteristic $0$. It turned out that all such algebras are commutative. Later, simple finite-dimensional Novikov algebras over an algebraically closed field of characteristic $p>0$ were described by J.~M.~Osborn~\cite{Osborn1992} and X.~Xu~\cite{Xu1996} (see also~\cite{ZZ2024}). A complete classification of simple and semisimple finite-dimensional Novikov algebras over an arbitrary field of characteristic $p>0$ was recently obtained by V.~N.~Zhelyabin and A.~P.~Pozhidaev~\cite{PZ2026}.

The radical theory of Novikov algebras has been actively developing in recent years by the author. In~\cite{P2022}, the Baer radical of Novikov algebras was constructed and studied. In~\cite{P2024}, the Andrunakievich radical was described, and in~\cite{P2026}, the radicals of Lie-solvable Novikov algebras were investigated. Recently, in \cite{P2027}, the Jacobson radical was constructed and described.

The present paper is devoted to the study of the K\"othe radical (the upper nil radical) in the variety of Novikov algebras. The main tool of our investigation is the notion of a weakly right nilpotent algebra, introduced by A.~I.~Shirshov in~\cite{S1957} and investigated by I.~M.~Mikheev in~\cite{M1972} for right alternative algebras. The closely related notion of a weakly solvable algebra appeared in the 1960s in Lie theory and the corresponding radical was constructed by V.~A.~Parfenov~\cite{P1971}. The central result of the paper is the existence of the K\"othe radical in Novikov algebras. We show that the class of weakly right nilpotent Novikov algebras coincides with the class of r-nil Novikov algebras. This equivalence allows us to establish the radicality and heredity of the K\"othe radical. Moreover, we construct an example of a prime r-nil Novikov algebra over a field of positive characteristic, demonstrating that the K\"othe radical strictly contains the Baer radical. It is the main difference from commutative or Lie-solvable case \cite{P2026}.

The paper is organized as follows. Section~2 collects the necessary definitions and preliminary facts. In Section~3, we prove that weakly right nilpotent Novikov algebras form a radical class. In Section~4, we establish the coincidence of the weakly right nilpotent radical with the K\"othe radical, prove its heredity, and obtain a structural description of $\mathcal{K}$-semisimple algebras. In Section~5, we construct an example of a prime r-nil Novikov algebra over a field of positive characteristic.

\section{Preliminaries}

Let $\mathcal{C}$ be a class of algebras. We say that an algebra is a $\mathcal{C}$-algebra if it belongs to $\mathcal{C}$. We say that an ideal is a $\mathcal{C}$-ideal if it (considered as an algebra) belongs to the class $\mathcal{C}$.

Let $\mathcal{N}$ be a class of algebras closed under taking homomorphic images and ideals. A subclass $\mathcal{R}$ of $\mathcal{N}$ is called \textit{radical} in $\mathcal{N}$ if the following conditions are satisfied:
\begin{enumerate}
    \item The class $\mathcal{R}$ is closed under homomorphic images.
    \item Every algebra $A$ in $\mathcal{N}$ contains an $\mathcal{R}$-ideal $\mathcal{R}(A)$ that contains all $\mathcal{R}$-ideals of $A$.
    \item For every algebra $A$ in $\mathcal{N}$, the algebra $A/\mathcal{R}(A)$ contains no nonzero $\mathcal{R}$-ideals.
\end{enumerate}

\noindent If $\mathcal{R}$ is a radical, then an algebra is called $\mathcal{R}$\textit{-semisimple} if it contains no nonzero $\mathcal{R}$-ideals.

 A radical class $\mathcal{R}$ is called \textit{hereditary} in the class $\mathcal{N}$ if $\mathcal{R}(I)=I\cap\mathcal{R}(A)$ for every algebra $A$ in $\mathcal{N}$ and every ideal $I\subseteq A$.

 In an algebra, we denote the operator of right multiplication by an element $x$ by $R_x$:
\[R_x(a):=ax\]
and the left-normed product of the elements $x_1,\dots, x_n$ by $x_1\dots x_n$:
\[x_1\dots x_n := x_1R_{x_2}\dots R_{x_n}.\]
For example, $x_1x_2x_3:=(x_1x_2)x_3$.

Let $A$ be an algebra generated by $x_1,\dots,x_n$. The algebra $A$ is called \textit{right nilpotent with respect to the generating set} $F=\{x_1,\dots,x_n\}$ if there exists $t\in\mathbb{N}$ such that $x_{i_1}\dots x_{i_t}=0$ for all $i_k\in\{1,\dots,n\}$.

We combine the terminology from the papers~\cite{M1972} and~\cite{P1971}.

An algebra $A$ is called \textit{locally right nilpotent in the sense of Shirshov} (or \textit{weakly right nilpotent}) if every finite subset of its elements generates a subalgebra that is right nilpotent with respect to that subset.

We define a non-associative polynomial $S_n(x_1,\dots,x_{2^n})$ recursively as follows. 
\[S_0(x_1) = x_1, \quad S_n(x_1,\dots,x_{2^n})=S_{n-1}(x_1,\dots,x_{2^{n-1}})S_{n-1}(x_{2^{n-1}+1},\dots,x_{2^n}).\]

The algebra $A$ is called \textit{solvable with respect to the generating set} $F=\{x_1,\dots,x_n\}$ if there exists $t\in\mathbb{N}$ such that $S_t(x_{i_1},\dots,x_{i_{2^t}})=0$ for all $i_k\in\{1,\dots,n\}$.

An algebra $A$ is called \textit{locally solvable in the sense of Shirshov} (or \textit{weakly solvable}) if every finite subset of its elements generates a subalgebra that is solvable with respect to that subset.

I.~M.~Mikheev proved in~\cite{M1972} that weakly right nilpotent algebras form a radical subclass of the class of right alternative algebras. In the paper by V.~A.~Parfenov~\cite{P1971}, it was proved that weakly solvable algebras form a radical class (all statements in~\cite{P1971} are given for Lie algebras, but the proofs work for the class of all nonassociative algebras).

The class of weakly right nilpotent algebras is denoted by $\mathcal{M}$. If this class is radical in the variety under consideration, then this radical is called the \textit{weakly right nilpotent radical}.

Let $x$ be an element of a nonassociative algebra. We recursively define
\[x^1 = x, \quad x^n = x^{n-1}x.\]
We say that $x$ is \textit{r-nilpotent} if $x^n=0$ for some $n>0$. An algebra is called \textit{r-nil} if all of its elements are r-nilpotent. The class of all r-nil algebras is denoted by $\mathcal{K}$. If this class is radical in the variety under consideration, then this radical is called the \textit{upper nil radical} or the \textit{K\"othe radical}.

Recall that an ideal $I$ of an algebra is called \textit{trivial} if it has zero multiplication, $I^2=0$. Let us construct an ascending chain of ideals in an algebra $A$. Let $\mathcal{B}_0(A) = 0$, and let $\mathcal{B}_1(A)$ be the sum of all trivial ideals of $A$. Next, we define $\mathcal{B}_\alpha(A)$ recursively: if $\alpha$ is a limit ordinal, set $\mathcal{B}_{\alpha}(A)=\bigcup_{\beta<\alpha}\mathcal{B}_{\beta}(A)$; if $\alpha$ is not a limit ordinal, then $\mathcal{B}_{\alpha}(A)$ is defined as the full preimage in $A$ of the ideal $\mathcal{B}_1(A/\mathcal{B}_{\alpha - 1}(A))$ of the algebra $A/\mathcal{B}_{\alpha - 1}(A)$ under the canonical homomorphism:
\[\mathcal{B}_\alpha(A)/\mathcal{B}_{\alpha - 1}(A)=\mathcal{B}_1(A/\mathcal{B}_{\alpha - 1}(A)).\]

The constructed \textit{Baer chain} terminates at some step. The largest ideal of the chain is denoted by $\mathcal{B}(A)$. The class of Novikov algebras that coincide with the maximal ideal of their Baer chain is radical. The corresponding radical is called the \textit{Baer radical} and is denoted by $\mathcal{B}$. This radical is hereditary.

Let $A$ be an associative and commutative algebra with a derivation $d:A\to A$ and multiplication $(x,y)\to x\cdot y$. Then a space $A$ with a multiplication
\[x\circ y = x\cdot d(y)\]
is a Novikov algebra. This algebra is called \textit{Gelfand-Dorfman construction} of an algebra $A$ with a derivation $d$ and is denoted by $GD(A,d)$.

\section{Weakly Right Nilpotent Radical of Novikov Algebras}

In any Novikov algebra, the following identity holds (see \cite{UZ2021}):
\begin{equation}\label{eq}
a(bx_1\dots x_t)=abx_1x_2\dots x_t - \sum\limits_{i=1}^t (a,b,x_i)x_1\dots x_{i-1}x_{i+1}\dots x_t.
\end{equation}
This identity is a key technical tool for this section, as it allows us to rearrange factors in left-normed words.

\begin{lemma}\label{lem3.1}
Let $A$ be a Novikov algebra and $x_1,\dots,x_{s}\in A$. Then for any $0<t<s-1$ the element $x_1\dots x_s$ can be written as a sum of elements of the form
\[(x_{i_1}\dots x_{i_{s-t}})(x_{j_1}\dots x_{j_t})\]
and
\[(x_{i_1}\dots x_{i_{s-t-1}})(x_{j_1}\dots x_{j_{t+1}}).\]
\end{lemma}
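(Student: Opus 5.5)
The plan is to use identity~\eqref{eq} to split a left-normed word $x_1\dots x_s$ into a product of two shorter left-normed words, and then remove the terms of the wrong shape by an induction on $t$. Throughout I will use right commutativity $(uv)w=(uw)v$. It implies that a left-normed word $x_{i_1}x_{i_2}\dots x_{i_m}$ is symmetric in $x_{i_2},\dots,x_{i_m}$, and that $(u\,w)y_1\dots y_k=(u y_1\dots y_k)\,w$ for any element $w$.

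\emph{Step 1 (one application of \eqref{eq}).} Put $a=x_1\dots x_{p}$, $b=x_{p+1}$ and $y_1,\dots,y_{q}=x_{p+2},\dots,x_s$, so that $p+q+1=s$. Identity~\eqref{eq} gives
\[x_1\dots x_s=a(by_1\dots y_q)+\sum_{i=1}^q (a,b,y_i)\,y_1\dots\widehat{y_i}\dots y_q .\]
Expanding $(a,b,y_i)=aby_i-a(by_i)$ and using the symmetry of left-normed words, the term $aby_i\,y_1\dots\widehat{y_i}\dots y_q$ equals $x_1\dots x_s$. Moving $by_i$ to the end by right commutativity gives
\[(a(by_i))y_1\dots\widehat{y_i}\dots y_q=(a\,y_1\dots\widehat{y_i}\dots y_q)(by_i).\]
Hence
\[(1-q)\,x_1\dots x_s=a(by_1\dots y_q)-\sum_i (a\,y_1\dots\widehat{y_i}\dots y_q)(by_i).\]
The first term on the right has shape $(p)(q+1)$ and the others have shape $(s-2)(2)$.

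\emph{Step 2 (avoiding the coefficient).} The coefficient $1-q$ cannot be inverted in arbitrary characteristic, so Step~1 alone does not suffice. This is exactly why the statement allows two adjacent length splits, $(s-t)(t)$ and $(s-t-1)(t+1)$. I will take $q=t$ or $q=t-1$ and compare the two resulting relations. I will also do the same for words of the shape $(u)(v)$, where \eqref{eq} is applied with $a=u$ and $b$ the first letter of $v$. The aim is an identity in which $x_1\dots x_s$ appears with coefficient $1$, modulo products of the two permitted shapes.

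\emph{Step 3 (induction on $t$).} To handle the $(s-2)(2)$ terms I will induct on $t$, proving the stronger statement that every product $(u)(v)$ with $|v|\le t+1$ can be rewritten in the two allowed shapes for the given $t$. Applying \eqref{eq} to such a product $u(v)$, where $v=by_1\dots$, lengthens the right factor by one letter at a time. This uses the symmetry of the $y$'s to collect the associator terms into the shape $(\cdot)(by_i)$ and then into longer right factors. The base case $t=1$ is immediate, since $x_1\dots x_s=(x_1\dots x_{s-1})x_s$.

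The main obstacle is Step~2: cancelling the integer multiples of $x_1\dots x_s$ without dividing. I would first try a telescoping argument in which consecutive values of $q$ are subtracted, so that the coefficients $1-q$ and $1-(q+1)$ differ by $1$. If that fails, I would use left symmetry, $(a,b,y)=(b,a,y)$, to produce a second expansion of the associator terms with a different coefficient.
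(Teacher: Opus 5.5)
Your Step~1 is algebraically correct, but the relation it produces is circular rather than a step forward. By your own computation, each term $(a\,y_1\cdots\widehat{y_i}\cdots y_q)(by_i)$ equals $x_1\dots x_s-(a,b,y_i)\,y_1\cdots\widehat{y_i}\cdots y_q$, so substituting back just returns the original expansion from \eqref{eq}. Steps~2 and~3 are not carried out, and the route they sketch does not close. Subtracting the relation for $q=t$ from the one for $q=t-1$ does isolate $x_1\dots x_s$ with coefficient $1$, and it produces the two allowed shapes $(s-t)(t)$ and $(s-t-1)(t+1)$. However, it also leaves $2t-1$ terms of shape $(s-2)(2)$, which are not allowed once $t\ge 3$. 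Step~3 proposes to treat such products $u(v)$ by applying \eqref{eq} to them. But \eqref{eq} does not lengthen the right factor: it rewrites $u(by)$ as the left-normed word $uby$, which is again $x_1\dots x_s$, minus an associator. Doing this to the leftover terms brings back $t-(t-1)=1$ copy of $x_1\dots x_s$, which cancels the one you isolated and leaves a tautology. So the obstacle you flag as the main one is genuinely unresolved, and the fallback appeal to left symmetry is not yet an argument.

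The missing ingredient is the Novikov identity $(x,y,z)w=(x,yw,z)=(xw,y,z)$, recalled at the start of Section~4. It lets you move trailing right multiplications \emph{into the middle argument} of an associator instead of expanding it. Take $q=t$ and $p=s-t-1\ge 1$, and set $v_i=by_1\cdots\widehat{y_i}\cdots y_t$, which has length $t$. The identity gives $(a,b,y_i)\,y_1\cdots\widehat{y_i}\cdots y_t=(a,v_i,y_i)$, and by right commutativity $(a,v_i,y_i)=(ay_i)v_i-a(v_iy_i)$. A single application of \eqref{eq} then yields
\[
x_1\dots x_s=a(by_1\cdots y_t)+\sum_{i=1}^{t}\bigl((ay_i)v_i-a(v_iy_i)\bigr).
\]
Its terms have shapes $(s-t-1)(t+1)$, $(s-t)(t)$ and $(s-t-1)(t+1)$, all with coefficient $1$. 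No telescoping, induction, or division is needed, and this is exactly the paper's proof.
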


\begin{proof}
Let $y=x_{1}\dots x_{s-t-1}$ and $y_k=x_{s-t}\dots x_{k-1}x_{k+1}\dots x_{s}$ for $s-t\le k\le s$. Then the identity \eqref{eq} implies
\[x_{1}\dots x_{s} = y(x_{s-t}\dots x_{s}) + \sum\limits_{k=s-t+1}^{s} (y,x_{s-t},x_k)x_{s-t+1}\dots x_{k-1}x_{k+1}\dots x_{s} =\]
\[= y(x_{s-t}\dots x_{s}) + \sum\limits_{k=s-t+1}^{s} (y,y_k,x_k) = y(x_{s-t}\dots x_{s}) + \sum\limits_{k=s-t+1}^{s} ((yx_k)y_k-y(y_kx_k)).\]
This proves the lemma.
\end{proof}

The following lemma is a strengthening of Lemma~\ref{lem3.1} and allows us to represent long words via products of bounded degree.

\begin{lemma}\label{lem3.2}
Let $A$ be a Novikov algebra and $x_1,\dots,x_{s}\in A$. Then for any $0<t<s-1$ the element $x_1\dots x_s$ can be written as a sum of elements of the form
\[
aR_{b_1}\dots R_{b_q}
\]
for some $q\ge 0$, where $a$ is a left-normed word of length from $1$ to $t+1$ in the generators $x_k$ and $b_i$ are left-normed words of length from $t$ to $t+1$ in the generators $x_k$.
\end{lemma}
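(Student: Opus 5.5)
Fix $t$ and argue by induction on $s$, using Lemma~\ref{lem3.1} as the inductive step and right commutativity to keep everything in the shape $aR_{b_1}\dots R_{b_q}$. The claim is slightly more general than needed, so I would prove the stronger statement: \emph{every left-normed word $x_{i_1}\dots x_{i_m}$ of length $m\ge 1$ in the generators is a sum of elements $aR_{b_1}\dots R_{b_q}$ with $a$ of length between $1$ and $t+1$ and each $b_i$ of length $t$ or $t+1$.}

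For $m\le t+1$ the word itself is such an element, with $q=0$. For $m\ge t+2$ we have $0<t<m-1$, so Lemma~\ref{lem3.1} applies. It writes the word as a sum of products $uv$, where $v$ is a left-normed word of length $t$ or $t+1$. The factor $u$ is a left-normed word of length $m-t$ or $m-t-1$, which is at least $1$ and strictly less than $m$ since $t\ge1$.

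By the induction hypothesis, $u=\sum a R_{b_1}\dots R_{b_q}$ with admissible $a$ and $b_i$. Then
\[uv=\sum aR_{b_1}\dots R_{b_q}R_v,\]
and $v$ is an admissible $b$-factor because its length is $t$ or $t+1$. This completes the induction.

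The induction on the length $m$ is legitimate because Lemma~\ref{lem3.1} is stated for arbitrary elements $x_1,\dots,x_s$. Its outputs $u$ and $v$ are again left-normed words in the same generators, so the hypothesis applies to $u$.

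I expect the only point needing care to be bookkeeping: the lengths must decrease strictly, and the right factor must stay a left-normed word of length in $\{t,t+1\}$. This holds precisely because Lemma~\ref{lem3.1} produces right factors of length exactly $t$ or $t+1$ and left factors of length at most $m-t<m$.

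Right commutativity $(xy)z=(xz)y$ is not needed to obtain the form. It would only be needed to reorder the $R_{b_i}$, which the statement does not require.

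Finally, $x_1\dots x_s$ is the special case $m=s$ of the stronger statement, which gives the lemma.
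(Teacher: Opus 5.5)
Your proposal is correct and is essentially the paper's proof: induction on the word length, with Lemma~\ref{lem3.1} supplying the step, the induction hypothesis applied to the shorter left factor, and the right factor of length $t$ or $t+1$ appended as $R_{b_{q+1}}$. Your explicit strengthening to words of every length $m\ge 1$ makes precise what the paper's ``if $s\le t+1$, there is nothing to prove'' leaves implicit: the shorter left factor, whose length may fall outside the lemma's hypothesis $t<s-1$, is still covered, either directly as $a$ or by the inductive step.
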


\begin{proof}
We proceed by induction on $s > 0$. If $s \le t+1$, there is nothing to prove. Suppose $s > t+1$. Then, by Lemma 3.1, we can express $x_1 \dots x_s$ as a sum of elements of the form
\[(x_{i_1} \dots x_{i_{s-t}})(x_{j_1} \dots x_{j_t})\]
and
\[(x_{i_1} \dots x_{i_{s-t-1}})(x_{j_1} \dots x_{j_{t+1}}).\]
Let $y$ denote the first factor in such a term, and $z$ the second. Since the length of $y$ is less than $s$, the induction hypothesis implies that $y$ can be expressed as a sum of elements of the form
\[aR_{b_1} \dots R_{b_q},\]
where $a$ is a left-normed word of length between $1$ and $t+1$ in the generators $x_k$, and each $b_i$ is a left-normed word of length between $t$ and $t+1$ in the generators $x_k$. Setting $b_{q+1} = z$, we obtain
\[x_1 \dots x_s = aR_{b_1} \dots R_{b_q}R_{b_{q+1}}.\]
This proves the lemma.
\end{proof}

\begin{lemma}\label{lem3.25}
Let $A$ be a Novikov algebra and $x_1,\dots,x_{s}\in A$. Then for any $0<t$, $s\ge 2t$ the element $x_1\dots x_s$ can be written as a sum of elements of the form
\[
aR_{b_1}\dots R_{b_q}
\]
for some $q\ge 0$, where $a$ is a left-normed word of length from $t$ to $2t$ in the generators $x_k$ and $b_i$ are left-normed words of length from $t$ to $t+1$ in the generators $x_k$.
\end{lemma}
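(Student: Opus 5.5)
We prove Lemma~\ref{lem3.25} by induction on $s\ge 2t$, mirroring the proof of Lemma~\ref{lem3.2}. The only new requirement is on the head word $a$: its length must now lie between $t$ and $2t$, rather than between $1$ and $t+1$. The natural tool is again Lemma~\ref{lem3.1}, which splits off a right factor of length $t$ or $t+1$. We apply it only while the remaining left factor is long enough to stay of length at least $t$.

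\emph{Base case.} If $2t\le s\le 2t$, i.e.\ $s=2t$, the word $x_1\dots x_s$ is itself a left-normed word of length $2t$. We take $a=x_1\dots x_s$ and $q=0$. More generally, any $s$ with $t\le s\le 2t$ is covered trivially, and we use this in the inductive step.

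\emph{Inductive step.} Suppose $s>2t$; then $0<t<s-1$, so Lemma~\ref{lem3.1} applies. It expresses $x_1\dots x_s$ as a sum of products $yz$ of two types:
\begin{itemize}
\item $y$ has length $s-t$ and $z$ has length $t$;
\item $y$ has length $s-t-1$ and $z$ has length $t+1$.
\end{itemize}
Here $y$ and $z$ are left-normed words in the generators. Since $s\ge 2t+1$, we get $s-t\ge t+1$ and $s-t-1\ge t$, so $y$ always has length between $t$ and $s-1$.

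If the length of $y$ is at least $2t$, the induction hypothesis writes $y$ as a sum of terms $aR_{b_1}\dots R_{b_q}$ of the required form. If its length lies in $[t,2t)$, we take $a=y$ and $q=0$. In both cases we set $b_{q+1}=z$, whose length is $t$ or $t+1$. Multiplying on the right by $z$ then gives $x_1\dots x_s$ as a sum of terms $aR_{b_1}\dots R_{b_{q+1}}$ with the required length bounds. Linearity of right multiplication lets us distribute over the sums.

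\emph{Main point to check.} The only delicate step is ensuring the head never drops below length $t$. This is exactly the inequality $s-t-1\ge t$, which holds precisely because we only invoke Lemma~\ref{lem3.1} when $s\ge 2t+1$. We also need the head never to exceed $2t$. This is guaranteed because we stop splitting as soon as the left factor has length in $[t,2t]$, and otherwise the induction hypothesis applies to it. It is also worth confirming that the words produced by Lemma~\ref{lem3.1} are genuinely left-normed words in the original generators. This holds from its proof, where $y$, $yx_k$, $y_k$ and $y_kx_k$ are all left-normed.
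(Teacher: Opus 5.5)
Your proof is correct and follows the paper's own argument: induction on $s\ge 2t$, one application of Lemma~\ref{lem3.1} to split off a right factor $z$ of length $t$ or $t+1$, then either $a=y$ when $t\le \deg y<2t$ or the induction hypothesis applied to $y$ when $\deg y\ge 2t$, with $z$ appended as the last $b$. You also check explicitly that $s\ge 2t+1$ gives $t<s-1$ and $\deg y\ge s-t-1\ge t$; the paper leaves both of these implicit.
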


\begin{proof}
   We proceed by induction on $s \ge 2t$. If $s = 2t$, there is nothing to prove. Suppose $s > 2t$. Then, by Lemma~\ref{lem3.1}, we can express $x_1 \dots x_s$ as a sum of elements of the form
\[(x_{i_1} \dots x_{i_{s-t}})(x_{j_1} \dots x_{j_t})\]
and
\[(x_{i_1} \dots x_{i_{s-t-1}})(x_{j_1} \dots x_{j_{t+1}}).\]
Let $y$ denote the first factor in such a term, and $z$ the second. If $t\le \deg y \le 2t$ then we take $a=y$ and $q=1$. If $\deg y \ge 2t$ then the induction hypothesis implies that $y$ can be expressed as a sum of elements of the form
\[aR_{b_1} \dots R_{b_q},\]
where $a$ is a left-normed word of length between $t$ and $2t$ in the generators $x_k$, and each $b_i$ is a left-normed word of length between $t$ and $t+1$ in the generators $x_k$. Setting $b_{q+1} = z$, we obtain
\[x_1 \dots x_s = aR_{b_1} \dots R_{b_q}R_{b_{q+1}}.\]

This proves the lemma.
\end{proof}

\begin{lemma}\label{lem3.3}
    If $A$ is a Novikov algebra right nilpotent of index $t$ with respect to its set of generators $\{x_1,\dots,x_n\}$, then 
    \[R_{x_{i_2}}\dots R_{x_{i_t}}(A)=0\] 
    for all $i_k\in\{1,\dots,n\}$. 
\end{lemma}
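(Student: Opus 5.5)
The plan is to reduce everything to nonassociative monomials in the generators and to use only right commutativity $(xy)z=(xz)y$. Since $A$ is generated by $x_1,\dots,x_n$, it is spanned by nonassociative monomials (arbitrarily bracketed products) in $x_1,\dots,x_n$. The operator $R_{x_{i_2}}\dots R_{x_{i_t}}$ is linear, so it suffices to show that
\[
w\,x_{i_2}\dots x_{i_t}=0
\]
for every such monomial $w$ and all $i_k\in\{1,\dots,n\}$. Here, as in the paper's notation, $w\,x_{i_2}\dots x_{i_t}$ means $wR_{x_{i_2}}\dots R_{x_{i_t}}$.

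I will prove this by induction on the degree of $w$, for all choices of indices simultaneously.

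\emph{Base case.} If $\deg w=1$, then $w=x_{i_1}$ is a generator. In that case $w\,x_{i_2}\dots x_{i_t}=x_{i_1}x_{i_2}\dots x_{i_t}$ is a left-normed word of length $t$ in the generators, which vanishes by the hypothesis of right nilpotency of index $t$.

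\emph{Inductive step.} If $\deg w>1$, write $w=uv$ with $u,v$ monomials of smaller degree. Right commutativity says $(uv)z=(uz)v$, i.e.\ $R_vR_z$ applied to $u$ equals $R_zR_v$ applied to $u$. Applying it $t-1$ times moves $v$ past each right factor in turn:
\[
(uv)x_{i_2}x_{i_3}\dots x_{i_t}=(ux_{i_2}v)x_{i_3}\dots x_{i_t}=\dots=(u\,x_{i_2}\dots x_{i_t})\,v .
\]
At each step the identity is applied to the innermost product $(\,\cdot\,v)x_{i_k}$, with the remaining right multiplications then applied on top. By the induction hypothesis applied to $u$, we have $u\,x_{i_2}\dots x_{i_t}=0$. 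Hence $w\,x_{i_2}\dots x_{i_t}=0$.

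There is no real obstacle here. The only point needing care is the bookkeeping in the repeated use of right commutativity: one must check that $v$ can be pushed all the way to the right through the entire operator string, rather than only one position. This follows by a trivial inner induction on the number of right factors. Note that neither left symmetry nor Lemmas~\ref{lem3.1}--\ref{lem3.25} are needed.
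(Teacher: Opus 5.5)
Your proof is correct and is essentially the paper's argument. Both proceed by induction on the degree of a nonassociative monomial $w$: the base case is the right-nilpotency hypothesis, and in the inductive step one writes $w=uv$, uses right commutativity to move $R_v$ past $R_{x_{i_2}}\dots R_{x_{i_t}}$, and applies the induction hypothesis to $u$.
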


\begin{proof}
    The statement of the lemma means that for any non-associative monomial 
    \[f(t_1,\dots,t_n)\] 
    in variables $t_1,\dots,t_n$ we have $f(x_1,\dots,x_n)R_{x_{i_2}}\dots R_{x_{i_t}}=0$. Let us prove it by induction on $\deg(f)$. If $\deg(f)=1$ then we can assume that $f=x_s$ and 
\[f(x_1,\dots,x_n)R_{x_{i_2}}\dots R_{x_{i_t}}=x_sx_{i_2}\dots x_{i_t} =0.\]
Suppose that we proved the statement for $\deg f < m$. Let $\deg f = m$. Then $f=f_1f_2$, where $\deg f_1 < m$. Then
\[f(x_1,\dots,x_n)R_{x_{i_2}}\dots R_{x_{i_t}}= f_1R_{f_2}R_{x_{i_2}}\dots R_{x_{i_t}} = f_1R_{x_{i_2}}\dots R_{x_{i_t}}R_{f_2} = 0\]
by the induction hypothesis. This proves the lemma.
\end{proof}

Lemma~\ref{lem3.25} is the main tool for proving that the class of weakly right nilpotent algebras is closed under extensions.

\begin{lemma}\label{lem3.4}
Let $A$ be a Novikov algebra and $I$ an ideal of $A$ such that the algebras $A/I$ and $I$ are weakly right nilpotent. Then the algebra $A$ is weakly right nilpotent.
\end{lemma}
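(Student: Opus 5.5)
Fix a finite set $F=\{x_1,\dots,x_n\}\subseteq A$ and let $B$ be the subalgebra it generates. We must find $N$ such that every left-normed word $x_{i_1}\dots x_{i_N}$ vanishes.

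\emph{Step 1: long words land in $I$.} Since $A/I$ is weakly right nilpotent, the images $\bar x_1,\dots,\bar x_n$ generate a subalgebra that is right nilpotent with respect to them. Hence there is $t$ such that every left-normed word of length $t$ in the $x_k$ lies in $I$. A left-normed word of length $t+1$ is such a word multiplied on the right by some $x_k$, so it also lies in $I$, since $I$ is an ideal.

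\emph{Step 2: normal form for long words.} Apply Lemma~\ref{lem3.25} with this $t$. For $s\ge 2t$, every $x_{i_1}\dots x_{i_s}$ is a sum of elements $aR_{b_1}\dots R_{b_q}$. Here $a$ is a left-normed word of length between $t$ and $2t$, and each $b_j$ is a left-normed word of length $t$ or $t+1$. Consequently $a\in I$ and all $b_j\in I$.

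There are only finitely many such words, so collect them into a finite set $G\subseteq I$. Since $I$ is weakly right nilpotent, the subalgebra $C\subseteq I$ generated by $G$ is right nilpotent of some index $m$ with respect to $G$.

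By Lemma~\ref{lem3.3}, applied to $C$ with generating set $G$, we get $cR_{g_2}\dots R_{g_m}=0$ for every $c\in C$ and $g_j\in G$. Take $c=a\in G$. Then $aR_{b_1}\dots R_{b_q}=0$ whenever $q\ge m-1$.

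\emph{Step 3: counting length.} It remains to ensure that $q$ is large when $s$ is large. In the recursion of Lemma~\ref{lem3.25}, each step peels off a $b_j$ of length at most $t+1$, and the final $a$ has length at most $2t$. Therefore $s\le 2t+q(t+1)$, which gives $q\ge (s-2t)/(t+1)$.

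Choosing $N=2t+(m-1)(t+1)$, every term in the expansion of a word of length $s\ge N$ has $q\ge m-1$ and therefore vanishes. So $B$ is right nilpotent with respect to $F$, and $A$ is weakly right nilpotent.

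The main obstacle is Step 3: we must check that the expansion of Lemma~\ref{lem3.25} really preserves the total degree. Every summand must use exactly $s$ letters (multilinear bookkeeping via identity \eqref{eq}), so the length bound $s\le 2t+q(t+1)$ is valid for each term. I would verify this by tracing the degrees through Lemma~\ref{lem3.1} and the induction.

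A minor subtlety is that Lemma~\ref{lem3.3} is applied inside $C$, not $A$. This is fine because $a$ and all $b_j$ lie in $C$.
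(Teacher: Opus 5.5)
Your proposal is correct and follows the paper's proof essentially step for step: you choose $t$ from right nilpotency in $A/I$, use the normal form of Lemma~\ref{lem3.25}, take the finite set of left-normed words of length $t$ to $2t$ lying in $I$, and count lengths via $s\le 2t+q(t+1)$ (the paper arrives at the index bound $(t+1)(N+2)$). The degree bookkeeping you flag does hold, because Lemma~\ref{lem3.1} and the induction in Lemma~\ref{lem3.25} preserve the multiset of letters; your appeal to Lemma~\ref{lem3.3} is harmless but not needed, since $aR_{b_1}\dots R_{b_q}$ is already a left-normed word of length $q+1$ in $G$.
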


\begin{proof}
Let $M=\{1,\dots,n\}$. Let $F=\{x_1,\dots,x_n\}\subseteq A$, let $\overline{F}$ be the image of $F$ under the canonical homomorphism $A\to A/I$, and suppose that the right nilpotency index of the subalgebra generated by $\overline{F}$ with respect to $\overline{F}$ is $t$. Then $x_{i_1}\dots x_{i_t} \in I$ for all $i_j\in M$.

By Lemma~\ref{lem3.25}, we can represent a left-normed word of length $s$ in the generators $x_k$ in the form $aR_{b_1}\dots R_{b_q}$, where $2t\ge \deg a\ge t$ and the $b_i$ are left-normed words of length from $t$ to $t+1$ in the generators $x_k$. Consider the set of all left-normed words $x_{i_1}\dots x_{i_l}$ with $t\le l\le 2t$. This is a finite set of elements of $I$; hence the subalgebra $B$ generated by this set is right nilpotent with respect to this set. Let $N$ be its right nilpotency index. The length of $aR_{b_1}\dots R_{b_q}$ in generators $\{x_1,\dots,x_n\}$ is $L\le 2t+q(t+1)\le (q+2)(t+1)$. If $q\ge N$ then $aR_{b_1}\dots R_{b_q}=0$. Then, by Lemma~\ref{lem3.3}, the subalgebra generated by the set $F$ in the algebra $A$ is right nilpotent with respect to $F$ of index at most $(t+1)(N+2)$. This proves the lemma.
\end{proof}

We are now in a position to prove the main result of this section.

\begin{theorem}
In the class of Novikov algebras, weakly right nilpotent algebras form a radical class.
\end{theorem}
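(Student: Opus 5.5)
We verify the three axioms of a radical class for $\mathcal{M}$ inside the variety of Novikov algebras, using the standard Kurosh–Amitsur scheme.

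\emph{Closure under homomorphic images.} Let $\varphi:A\to \overline{A}$ be surjective and $\bar y_1,\dots,\bar y_n\in\overline{A}$. Choose preimages $y_1,\dots,y_n$. The subalgebra generated by the $y_k$ is right nilpotent with respect to them, so $y_{i_1}\dots y_{i_t}=0$ for all indices. Applying $\varphi$ gives $\bar y_{i_1}\dots\bar y_{i_t}=0$, so $\overline{A}$ is weakly right nilpotent.

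\emph{Existence of a largest $\mathcal{M}$-ideal.} First I show that the sum of two $\mathcal{M}$-ideals $I,J$ is an $\mathcal{M}$-ideal. We have $(I+J)/J\cong I/(I\cap J)$, which lies in $\mathcal{M}$ as a homomorphic image of $I$. Also $J$ is an ideal of the Novikov algebra $I+J$, and $J\in\mathcal{M}$. Lemma~\ref{lem3.4}, applied to the algebra $I+J$ and its ideal $J$, then gives $I+J\in\mathcal{M}$. By induction, every finite sum of $\mathcal{M}$-ideals lies in $\mathcal{M}$.

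Now let $\mathcal{M}(A)$ be the sum of all $\mathcal{M}$-ideals of $A$. Any finite subset of $\mathcal{M}(A)$ lies in some finite sum $I_1+\dots+I_m$, which is weakly right nilpotent. Weak right nilpotency is a local condition on finite subsets, and the generated subalgebra does not depend on the ambient algebra. Hence $\mathcal{M}(A)\in\mathcal{M}$, and it contains every $\mathcal{M}$-ideal of $A$.

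\emph{Semisimplicity of the quotient.} Suppose $\overline{K}$ is an $\mathcal{M}$-ideal of $A/\mathcal{M}(A)$, and let $K$ be its full preimage in $A$. Then $K$ is an ideal of $A$ containing $\mathcal{M}(A)$. Moreover, $K/\mathcal{M}(A)\cong\overline{K}\in\mathcal{M}$, and $\mathcal{M}(A)$ is an $\mathcal{M}$-ideal of the algebra $K$. Lemma~\ref{lem3.4} gives $K\in\mathcal{M}$. Since $K$ is an ideal of $A$, maximality forces $K\subseteq\mathcal{M}(A)$, so $\overline{K}=0$.

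\emph{Main obstacle.} The only step needing care is the extension property, which is exactly Lemma~\ref{lem3.4}. Its application is legitimate because Novikov algebras form a variety: the algebras $I+J$ and $K$ are Novikov, and Lemma~\ref{lem3.4} needs only that the ideal and the quotient are weakly right nilpotent, with no heredity assumption. It also matters that the ideal $J$ (respectively $\mathcal{M}(A)$) is weakly right nilpotent as an algebra in its own right. This holds because weak right nilpotency of a subset is intrinsic to the subalgebra that subset generates. Everything else is formal.
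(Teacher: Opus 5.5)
Your proposal is correct and takes the same route as the paper. You check closure under homomorphic images, use Lemma~\ref{lem3.4} with $(I+J)/J\cong I/(I\cap J)$ for finite sums, pass to the sum of all $\mathcal{M}$-ideals by locality, and apply Lemma~\ref{lem3.4} to the full preimage to show that $A/\mathcal{M}(A)$ is $\mathcal{M}$-semisimple, spelling out details the paper leaves implicit.
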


\begin{proof}
Since a homomorphic image of a weakly right nilpotent algebra is a weakly right nilpotent algebra, it follows from Lemma~\ref{lem3.4} and the homomorphism theorem that the sum of finitely many weakly right nilpotent ideals is a weakly right nilpotent ideal. Locality allows one to pass to an infinite sum; that is, the sum $\mathcal{M}(A)$ of all weakly right nilpotent ideals of the algebra $A$ is a weakly right nilpotent ideal. It follows from Lemma~\ref{lem3.4} that the algebra $A/\mathcal{M}(A)$ contains no weakly right nilpotent ideals. This proves the theorem.
\end{proof}

\section{K\"othe Radical of Novikov Algebras}

In this section, we establish the connection between the weakly right nilpotent radical and the K\"othe radical. Recall that in any Novikov algebra the following identity holds (\cite{P2022}):
\[(xt,y,z)=(x,yt,z)=(x,y,z)t.\]
Right commutativity of Novikov algebras allows us to prove a key lemma relating the r-nilpotency of elements to the nilpotency of right multiplication operators.

\begin{lemma}\label{lem4.1}
Let $x$ be an r-nilpotent element of a Novikov algebra $A$ of index $m$. Then $R_x$ is a nilpotent operator on the algebra $A$ of index at most $m+1$.
\end{lemma}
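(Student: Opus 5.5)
The plan is to show directly that $aR_x^{m+1}=0$ for every $a\in A$. I will combine the identity \eqref{eq} with the identity $(x,yt,z)=(x,y,z)t$ recalled at the start of this section; no induction on the structure of $a$ should be needed. If $m=1$ then $x=0$ and there is nothing to prove, so assume $m\ge 2$.

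\emph{Step 1.} Apply \eqref{eq} with $b=x$, $t=m-1$ and $x_1=\dots=x_{m-1}=x$. The left-hand side is $a(x^m)=0$. The first term on the right is $aR_x^{m}$. Each of the $m-1$ summands equals $(a,x,x)R_x^{m-2}$, since deleting one copy of $x$ from $x_1\dots x_{m-1}$ leaves $m-2$ right multiplications by $x$ applied to $(a,x,x)$. Hence
\[
aR_x^{m}=(m-1)\,(a,x,x)R_x^{m-2}.
\]
Multiplying on the right by $x$ gives
\[
aR_x^{m+1}=(m-1)\,(a,x,x)R_x^{m-1}.
\]

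\emph{Step 2.} Show that $(a,x,x)R_x^{m-1}=0$ using $(a,y,z)t=(a,yt,z)$. With $y=x^k$, $z=t=x$ this reads $(a,x^k,x)x=(a,x^{k+1},x)$. By induction on $k$ it follows that $(a,x,x)R_x^{k-1}=(a,x^k,x)$. Taking $k=m$ gives $(a,x,x)R_x^{m-1}=(a,x^m,x)=0$, because $x^m=0$.

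Combining the two steps yields $aR_x^{m+1}=0$ for all $a\in A$, so $R_x$ is nilpotent of index at most $m+1$. No assumption on the characteristic is used, since the coefficient $m-1$ multiplies zero.

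The main point to get right is Step 1: one must read the deleted-factor product in \eqref{eq} correctly as $(a,x,x)R_x^{m-2}$. My first attempt would instead have been to use left symmetry $(a,x,x)=(x,a,x)$ together with $(xa)R_x^{m-1}=x^mR_a=0$, which follows from right commutativity. That route leaves an awkward term $x(ax)R_x^{m-1}$. The identity $(a,yt,z)=(a,y,z)t$ avoids this term entirely, so it is the step I would rely on.
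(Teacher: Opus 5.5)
Your proof is correct and is essentially the paper's argument. The paper telescopes $aR_x^{m+1}=(a(x^1))R_x^{m}=(a(x^2))R_x^{m-1}=\dots=(a(x^m))x=0$ by hand, discarding at each step an associator $(a,x^k,x)R_x^{m-k}=(a,x^m,x)=0$. You obtain the same expansion in one stroke from \eqref{eq}, which is itself proved by exactly this peeling, and both versions rest only on $a(x^m)=0$ and the shift identity $(a,y,z)t=(a,yt,z)$.
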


\begin{proof}
Let $x^m=0$ and $y\in A$. We prove that $(yx^k)R_x^{t}=(yx^{k+1})R_x^{t-1}$ whenever $k+t-1\ge m$. Indeed,
\begin{multline*}
(yx^k)R_{x}^t = (yx^k)x R_x^{t-1} = (y,x^k,x)R_x^{t-1} + (yx^{k+1})R_x^{t-1} =\\ (y,x^{k+t-1},x) + (yx^{k+1})R_x^{t-1} = (yx^{k+1})R_x^{t-1}.
\end{multline*}
Then, by induction, $yR_x^{m+1}=(yx)R_{x}^{m} = (yx^{m})x = 0$. This proves the lemma.
\end{proof}

Lemma~\ref{lem4.1} allows us to establish the central result of the paper—the equivalence between weak right nilpotency and r-nilness for Novikov algebras.

\begin{theorem}\label{t4.2}
A Novikov algebra $A$ is weakly right nilpotent if and only if it is an r-nil algebra.
\end{theorem}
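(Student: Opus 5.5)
The direction ``weakly right nilpotent $\Rightarrow$ r-nil'' is immediate. The substantial direction is the converse, which I plan to reduce to Lemma~\ref{lem4.1} together with the fact that right multiplication operators commute in a Novikov algebra.

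\emph{Weakly right nilpotent implies r-nil.} Let $x\in A$. The subalgebra generated by the one-element set $\{x\}$ is right nilpotent with respect to $\{x\}$. Hence for some $t$ the left-normed word $x\,x\dots x$ ($t$ factors) vanishes. By the recursive definition $x^n=x^{n-1}x$, this word is exactly $x^t$. So $x^t=0$, every element is r-nilpotent, and $A\in\mathcal{K}$.

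\emph{r-nil implies weakly right nilpotent.}
\begin{enumerate}
\item Take a finite set $F=\{x_1,\dots,x_n\}\subseteq A$. Each $x_i$ is r-nilpotent, say of index $m_i$. Put $m=\max_i m_i$.
\item By Lemma~\ref{lem4.1}, each operator $R_{x_i}$ is nilpotent on $A$ with $R_{x_i}^{m+1}=0$.
\item Right commutativity $(yx)z=(yz)x$ says precisely that $R_xR_z=R_zR_x$ for all $x,z\in A$. So the operators $R_{x_1},\dots,R_{x_n}$ pairwise commute as linear maps on $A$.
\item Write a left-normed word in the generators as $x_{i_1}x_{i_2}\dots x_{i_t}=x_{i_1}R_{x_{i_2}}\dots R_{x_{i_t}}$. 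Suppose $t-1\ge n m+1$. By pigeonhole, some index $j$ occurs at least $m+1$ times among $i_2,\dots,i_t$.
\item Using commutativity, reorder the product so that it contains the factor $R_{x_j}^{m+1}=0$. Hence the word is zero.
\end{enumerate}
Thus the subalgebra generated by $F$ is right nilpotent with respect to $F$, of index at most $nm+2$. Since $F$ was arbitrary, $A$ is weakly right nilpotent.

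One point to check is that these words can be evaluated inside $A$ rather than inside the subalgebra generated by $F$. This is harmless: the words lie in that subalgebra, and their values coincide with the values of the operators $R_{x_i}$ acting on $A$, where Lemma~\ref{lem4.1} applies.

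I expect no serious obstacle. The real work was already done in Lemma~\ref{lem4.1}, which converts r-nilpotency of $x$ into nilpotency of $R_x$ on the whole algebra. What remains is simply that commuting nilpotent operators generate a nilpotent semigroup with a uniform bound.
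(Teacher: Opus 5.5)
Your proof is correct and follows the same route as the paper: Lemma~\ref{lem4.1}, then commutation of the right multiplications $R_{x_i}$ (which is right commutativity), then a pigeonhole count that produces a vanishing power $R_{x_j}^{m+1}$. The only difference is bookkeeping: you take $m=\max_i m_i$ and pigeonhole over positions $2,\dots,t$, giving index at most $nm+2$, whereas the paper counts over all positions with $m_1+\dots+m_n+n$ and must treat the case where the repeated generator sits in the first position separately.
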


\begin{proof}
Let $A$ be a weakly right nilpotent algebra and $x\in A$. Then the subalgebra generated by the element $x$ is right nilpotent with respect to $\{x\}$ of some index $t$, that is, $x^t=f_t(x,\dots,x)=0$.

Conversely, let $A$ be an r-nil Novikov algebra and $F=\{x_1,\dots,x_n\}\subseteq A$. Denote the r-nilpotency indices of the elements $x_1,\dots,x_n$ by $m_1,\dots,m_n$, respectively. Let $m=m_1+\dots+m_n+n$. Then in the monomial $z=x_{i_1}\dots x_{i_m}$, at least one $x_i$ occurs $m_i+1$ times. If this $x_i$ is in the first position, then $f_m(x_{i_1},\dots,x_{i_m})=0$. Suppose that $x_i$ is not in the first position. Then, by right commutativity, the word $z$ can be rewritten so that it contains the subword $x_{i_1}R_{x_i}^{m_i+1}$, which is zero by Lemma~\ref{lem4.1}. Thus the subalgebra generated by $F$ is right nilpotent with respect to $F$ of index $m$. This proves the theorem.
\end{proof}

\begin{corollary}\label{c4.3}
In the variety of Novikov algebras, the class of r-nil algebras is radical.
\end{corollary}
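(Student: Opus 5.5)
The corollary follows by combining Theorem~\ref{t4.2} with the theorem of Section~3. By Theorem~\ref{t4.2}, a Novikov algebra is r-nil exactly when it is weakly right nilpotent. So, inside the variety of Novikov algebras, the classes $\mathcal{K}$ and $\mathcal{M}$ coincide. Radicality is a property of the class alone, and it is checked inside the same ambient class $\mathcal{N}$ of Novikov algebras. That ambient class is closed under homomorphic images and ideals, since it is a variety. Hence $\mathcal{K}$ inherits all three defining conditions from $\mathcal{M}$, which the theorem of Section~3 shows to be radical.

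Concretely, I would verify the three conditions in turn. For closure under homomorphic images, let $A$ be r-nil and $\varphi\colon A\to B$ surjective. Every element of $B$ has the form $\varphi(x)$, and $\varphi(x)^n=\varphi(x^n)=0$. This can also be read off from the corresponding property of $\mathcal{M}$. For the existence of a largest $\mathcal{K}$-ideal, I would take $\mathcal{K}(A):=\mathcal{M}(A)$. This ideal is weakly right nilpotent, hence r-nil by Theorem~\ref{t4.2}. It also contains every r-nil ideal $I$, because such an $I$ is itself a Novikov algebra, hence weakly right nilpotent by Theorem~\ref{t4.2}, and therefore $I\subseteq\mathcal{M}(A)$. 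For the third condition, suppose $A/\mathcal{K}(A)$ contained a nonzero r-nil ideal. That ideal would be a nonzero weakly right nilpotent ideal of $A/\mathcal{M}(A)$, contradicting the semisimplicity part of the theorem of Section~3.

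There is no real obstacle here. The one point needing care is that Theorem~\ref{t4.2} must be applied to ideals and quotients regarded as algebras in their own right. This is legitimate because ideals and quotients of Novikov algebras are again Novikov algebras. It follows in particular that $\mathcal{K}(A)=\mathcal{M}(A)$ for every Novikov algebra $A$, so the K\"othe radical coincides with the weakly right nilpotent radical.
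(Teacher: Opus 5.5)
Your proposal is correct and matches the paper's argument. The paper gives no separate proof: the corollary follows by identifying $\mathcal{K}$ with $\mathcal{M}$ via Theorem~\ref{t4.2} and invoking the radicality of $\mathcal{M}$ from Section~3, and your check of the three radical axioms, including applying Theorem~\ref{t4.2} to ideals and quotients as Novikov algebras in their own right, just spells this out.
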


Corollary~\ref{c4.3} immediately implies the following relationship between radicals.

\begin{corollary}\label{c4.4}
The Baer radical of a Novikov algebra is an r-nil ideal. Thus, for every Novikov algebra $A$, one has the inclusions
\[\mathcal{B}(A)\subseteq \mathcal{K}(A) = \mathcal{M}(A).\]
\end{corollary}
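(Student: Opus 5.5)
We have to show that the Baer radical of a Novikov algebra is an r-nil ideal. The equality $\mathcal{K}(A)=\mathcal{M}(A)$ is immediate from Theorem~\ref{t4.2}: the classes $\mathcal{K}$ and $\mathcal{M}$ coincide, so by Corollary~\ref{c4.3} and the theorem of Section~3 they define the same radical, the sum of all ideals in this common class. It then suffices to prove $\mathcal{B}(A)\subseteq\mathcal{K}(A)$. Since $\mathcal{K}(A)$ contains every $\mathcal{K}$-ideal of $A$, this follows once we know that $\mathcal{B}(A)$, viewed as an algebra, is r-nil.

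We argue by transfinite induction along the Baer chain that each $\mathcal{B}_\alpha(A)$ is a $\mathcal{K}$-ideal of $A$.

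For $\alpha=0$ there is nothing to prove.

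For $\alpha=1$, the ideal $\mathcal{B}_1(A)$ is the sum of the trivial ideals of $A$. A trivial ideal $I$ satisfies $I^2=0$, so every $x\in I$ has $x^2=0$ and $I$ is r-nil. Hence each trivial ideal is a $\mathcal{K}$-ideal. By radicality (Corollary~\ref{c4.3}) the sum of $\mathcal{K}$-ideals lies in $\mathcal{K}(A)$, and the class is closed under ideals contained in $\mathcal{K}(A)$. Alternatively and more directly, $\mathcal{B}_1(A)\subseteq \mathcal{K}(A)$, and any subalgebra of an r-nil algebra is r-nil because r-nilpotency of an element is intrinsic to that element. So $\mathcal{B}_1(A)$ is r-nil.

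For a successor ordinal $\alpha$, set $I=\mathcal{B}_{\alpha-1}(A)$, which is r-nil by the induction hypothesis. Then $\mathcal{B}_\alpha(A)/I=\mathcal{B}_1(A/I)$, which is r-nil by the case $\alpha=1$ applied to $A/I$. Take $x\in\mathcal{B}_\alpha(A)$. Its image is r-nilpotent, so $x^k\in I$ for some $k$. The element $y=x^k$ lies in $I$, and $y$ is r-nilpotent in $I$, but we need $x^n=0$. This is the one step requiring care, and there are two ways to handle it.
\begin{itemize}
\item[(a)] Use closure under extensions. Lemma~\ref{lem3.4} together with Theorem~\ref{t4.2} shows that $\mathcal{K}$ is closed under extensions, since $\mathcal{K}=\mathcal{M}$. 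We apply it to the algebra $\mathcal{B}_\alpha(A)$ with its ideal $I$: both $I$ and $\mathcal{B}_\alpha(A)/I$ are r-nil, so $\mathcal{B}_\alpha(A)$ is r-nil. This avoids any computation with powers.
\item[(b)] Compute directly. Lemma~\ref{lem4.1} gives that $R_{x}$ applied to $x^k$ eventually becomes $y$-dependent. Route (a) is cleaner, so we use it.
\end{itemize}

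For a limit ordinal $\alpha$, every element of $\mathcal{B}_\alpha(A)=\bigcup_{\beta<\alpha}\mathcal{B}_\beta(A)$ lies in some $\mathcal{B}_\beta(A)$ and is therefore r-nilpotent. Thus the union is r-nil.

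Since the chain stabilizes at $\mathcal{B}(A)$, this ideal is r-nil. It is therefore contained in $\mathcal{K}(A)$, which gives $\mathcal{B}(A)\subseteq\mathcal{K}(A)=\mathcal{M}(A)$.

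The only real point is the successor step, namely extension closure. It is handled entirely by the earlier Lemma~\ref{lem3.4} via the identification $\mathcal{K}=\mathcal{M}$.
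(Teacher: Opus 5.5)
Your proof is correct and is essentially the paper's argument. The paper only states that the corollary follows immediately from Corollary~\ref{c4.3}, and your transfinite induction along the Baer chain is the standard unpacking of why a radical class containing all zero-multiplication algebras contains the Baer radical: trivial ideals are r-nil, Lemma~\ref{lem3.4} with Theorem~\ref{t4.2} gives closure under extensions at successor steps, and unions handle limit steps. Your aside (b) is too vague to count as an argument as written; a direct elementwise version would instead rerun the proof of Lemma~\ref{lem4.6} with the r-nilpotent elements $x^k,x^{k+1}\in I$. Since you rely only on route (a), this does not affect correctness.
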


Next, we prove the heredity of the K\"othe radical.

\begin{theorem}
In the class of Novikov algebras, the K\"othe radical is hereditary.
\end{theorem}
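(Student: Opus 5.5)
We must show $\mathcal{K}(I)=I\cap\mathcal{K}(A)$ for every ideal $I$ of a Novikov algebra $A$. By Theorem~\ref{t4.2}, $\mathcal{K}$ consists exactly of the r-nil algebras. The key observation is that r-nilpotency of an element is intrinsic: $x^n$ is computed inside any subalgebra containing $x$.

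\textbf{The inclusion $I\cap\mathcal{K}(A)\subseteq\mathcal{K}(I)$.} The set $J=I\cap\mathcal{K}(A)$ is an ideal of $A$, hence also an ideal of $I$. Every element of $J$ lies in $\mathcal{K}(A)$, so it is r-nilpotent, and therefore $J$ is an r-nil algebra. Since $\mathcal{K}(I)$ contains all r-nil ideals of $I$, we get $J\subseteq\mathcal{K}(I)$.

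\textbf{The reverse inclusion $\mathcal{K}(I)\subseteq\mathcal{K}(A)$.} This is the substantive direction, and I expect it to be the main obstacle. The difficulty is that $K=\mathcal{K}(I)$ is an ideal of $I$ but need not be an ideal of $A$, so we cannot apply maximality of $\mathcal{K}(A)$ directly. The standard route is to show $K$ is $A$-invariant, using that $\mathcal{K}$ is the largest r-nil ideal and using Novikov identities.

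I would try the following. Consider the ideal of $A$ generated by $K$, namely
\[
\widehat K = K + KA + AK + \dots .
\]
We show $\widehat K$ is r-nil. Since r-nil coincides with $\mathcal{M}$ (Theorem~\ref{t4.2}), it suffices to prove that $\widehat K$ is weakly right nilpotent, or to bound it using Lemma~\ref{lem3.4}.

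A cleaner approach works modulo $\mathcal{K}(A)$. Pass to $\bar A=A/\mathcal{K}(A)$, which has no nonzero r-nil ideals, and let $\bar I$ be the image of $I$. Then $\bar K$ is an r-nil ideal of $\bar I$. So it suffices to prove: \emph{if $A$ is $\mathcal{K}$-semisimple, then every ideal $I$ is $\mathcal{K}$-semisimple.}

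Suppose instead that $K\neq 0$ is an r-nil ideal of $I$. For $a\in A$, the identity $(xt,y,z)=(x,yt,z)=(x,y,z)t$ together with right commutativity should allow a Novikov analogue of Andrunakievich's lemma. Namely, for an ideal $K$ of an ideal $I$, the ideal of $A$ generated by $K$, raised to a bounded power (in the right-nilpotent sense), lands back in $K$. A suitable product such as $(KA)I$ or $K^{(3)}$ should lie in $K$; this is presumably the content of the earlier Baer radical work \cite{P2022}.

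Granting this, take $\widehat K$ as above. Then $\widehat K/(\widehat K\cap K)$ is nilpotent, hence Baer radical, hence r-nil by Corollary~\ref{c4.4}. Also, $\widehat K\cap K$ is r-nil. Lemma~\ref{lem3.4} combined with Theorem~\ref{t4.2} then gives that $\widehat K$ is r-nil. But $\widehat K$ is an ideal of the semisimple algebra $\bar A$, which forces $K=0$, a contradiction.

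\textbf{Where the work lies.} The delicate part is establishing the Andrunakievich-type containment for Novikov algebras. I would try to derive it from identity~\eqref{eq} and the associator identity above.

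A fallback, if a clean containment is not available, is to show directly that each element $ka$ with $k\in K$, $a\in A$ is r-nilpotent. One would use Lemma~\ref{lem4.1} and right commutativity to move the factors from $A$ to the right, where they act as right multiplications. This would reduce the question to words with many factors from $K$, and those vanish by weak right nilpotency of $K$.
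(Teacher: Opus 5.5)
Your first inclusion $I\cap\mathcal{K}(A)\subseteq\mathcal{K}(I)$ and the reduction to a $\mathcal{K}$-semisimple $A$ are correct. The reverse inclusion, however, is not proved. Everything rests on an ``Andrunakievich-type'' containment that you never state precisely or establish, and the versions you suggest do not come for free.

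\begin{itemize}
\item The inclusion $(KA)I\subseteq K$ is circular. Left symmetry and right commutativity give $(ka)i=k(ai)+(ai)k-a(ki)$, and the first two terms already lie in $K$. So $(KA)I\subseteq K$ is equivalent to $A(KI)\subseteq K$, which is part of the $A$-invariance you are trying to prove.
\item A uniform bound $\widehat K^{\langle n\rangle}\subseteq K$ is stronger still. Without extra hypotheses the identities only give $(K+AK)I\subseteq K$. Iterating this produces an ascending chain $K\subseteq K+AK\subseteq\cdots$ with trivial factors and no evident bound.
\item The fallback also falls short. Knowing that each single element $ka$ is r-nilpotent does not show that arbitrary elements of $\widehat K$ are r-nilpotent, since these include sums and further products of such elements.
\end{itemize}

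The missing idea, which is what the paper uses, is to exploit the semisimplicity of $I/K$ instead of analysing $\widehat K$. Since $K=\mathcal{K}(I)$ and $\mathcal{K}$ is a radical (Corollary~\ref{c4.3}), $I/K$ has no nonzero r-nil ideals. Trivial ideals are r-nil, so $I/K$ is semiprime. The paper then invokes Lemma~6 of [P2022]: if $J$ is an ideal of an ideal $I$ of a Novikov algebra $A$ and $I/J$ is semiprime, then $J$ is an ideal of $A$.

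This one-step statement follows from the identities you had in mind.
\begin{itemize}
\item First, $(ax)i=(ai)x\in J$ and $i(ax)=(ia)x-(ai)x+a(ix)$. These show that $J+AJ$ is an ideal of $I$ with $(J+AJ)I\subseteq J$. Its image in $I/J$ is therefore a trivial ideal, hence zero, so $AJ\subseteq J$.
\item Next, $(xa)i=x(ai)+(ax)i-a(xi)\in J$ and $i(xa)=(ix)a-(xi)a+x(ia)$ give the same conclusion for $J+JA$. Hence $JA\subseteq J$.
\end{itemize}

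Applied to $J=\mathcal{K}(I)$, this makes $\mathcal{K}(I)$ an r-nil ideal of $A$, so $\mathcal{K}(I)\subseteq\mathcal{K}(A)$ directly. You need neither $\widehat K$ nor Lemma~\ref{lem3.4}, nor even the reduction to semisimple $A$.
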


\begin{proof}
It is well known that, to prove heredity, it suffices to consider the cases $\mathcal{M}(A)=0$ and $\mathcal{M}(A)=A$. Note that the case $\mathcal{M}(A)=A$ is obvious from the definition of the weakly right nilpotent radical.

Suppose that $\mathcal{M}(A)=0$ and $I$ is an ideal of the algebra $A$. Since every right nilpotent ideal is weakly right nilpotent, the $\mathcal{M}$-semisimple algebra $I/\mathcal{M}(I)$ contains no nonzero ideals with zero multiplication. This means that the algebra $I/\mathcal{M}(I)$ is semiprime. By the results of~\cite{P2022} (Lemma 6), $\mathcal{M}(I)$ is an ideal of the algebra $A$. Since $\mathcal{M}(I)$ is an $\mathcal{M}$-radical algebra, we have $\mathcal{M}(I)=0$. This proves the theorem.
\end{proof}

To obtain a structural description of the K\"othe radical, we need the following lemma.

\begin{lemma}\label{lem4.6}
Let $A$ be a Novikov algebra, and let $y\in A$ be an element that is not r-nilpotent. Suppose moreover that every nonzero ideal of $A$ contains $y^t$ for some integer $t>0$. Then $\mathcal{K}(A)=0$.
\end{lemma}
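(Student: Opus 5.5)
We argue by contradiction. Suppose $\mathcal{K}(A)\neq 0$. By Corollary~\ref{c4.3} the class $\mathcal{K}$ is radical, so $\mathcal{K}(A)$ is an ideal of $A$ and every element of it is r-nilpotent. It is also nonzero, so the hypothesis of the lemma gives an integer $t>0$ with $y^t\in\mathcal{K}(A)$. Hence $y^t$ is r-nilpotent: there is $m$ with $(y^t)^m=0$, where the power is taken in the left-normed sense $(y^t)^m=(\dots(y^ty^t)\dots)y^t$.

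The remaining task is to show that this forces $y$ itself to be r-nilpotent, contradicting the choice of $y$. Rather than computing powers of powers directly, we use the operator form given by Lemma~\ref{lem4.1}. Since $y^t$ is r-nilpotent of index $m$, the operator $R_{y^t}$ is nilpotent on $A$, with $R_{y^t}^{m+1}=0$.

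Next we relate $R_{y^t}$ to $R_y$. Fix $k\ge t$. Using the identity $(xt,y,z)=(x,yt,z)=(x,y,z)t$ together with right commutativity, as in the proof of Lemma~\ref{lem4.1}, we expect to get, for large $k$,
\[
y^kR_{y^t}=y^{k+t},
\]
up to associator terms that collapse in the same way as in that proof. The key computation is
\[
y^k\cdot y^t=y^k\cdot(y^{t-1}y)=(y^ky^{t-1})y-(y^k,y^{t-1},y).
\]
The associator $(y^k,y^{t-1},y)$ rewrites as $(y,y^{t-1},y)R_y^{k-1}$, and we would handle it by induction on $t$.

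A cleaner alternative avoids associators altogether. The subalgebra $S$ generated by $y$ is spanned by nonassociative monomials in $y$. Every monomial of degree $d$ is, by right commutativity and the identity~\eqref{eq}, a linear combination of left-normed words built from $y$ and from products of smaller monomials. Instead of this, however, we can argue through radicality directly. The ideal $\mathcal{K}(A)$ contains $y^t$, and the quotient $\bar A=A/\mathcal{K}(A)$ satisfies $\bar y^{\,t}=0$. So $\bar y$ is r-nilpotent in $\bar A$, and by Lemma~\ref{lem4.1} the operator $R_{\bar y}$ is nilpotent on $\bar A$. Consequently $y^{s}\in\mathcal{K}(A)$ for all $s\ge t$. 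Now consider the subalgebra $B$ of $A$ generated by $y$. Its image in $\bar A$ is right nilpotent with respect to $\{\bar y\}$, and $B\cap\mathcal{K}(A)$ consists of r-nil elements. Applying Lemma~\ref{lem3.25} with this $t$, every long left-normed word $y^s$ is a sum of terms $aR_{b_1}\dots R_{b_q}$ in which $a$ and all $b_i$ are left-normed powers $y^l$ with $t\le l\le 2t$. All of these lie in $\mathcal{K}(A)$, which is weakly right nilpotent by Theorem~\ref{t4.2}.

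The finitely many elements $y^t,\dots,y^{2t}$ therefore generate a subalgebra that is right nilpotent with respect to them, of some index $N$. As in the proof of Lemma~\ref{lem3.4}, once $q\ge N$ each term vanishes, so $y^s=0$ for large $s$. This means $y$ is r-nilpotent, a contradiction, and hence $\mathcal{K}(A)=0$.

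The main obstacle is the step that passes from the r-nilpotency of $y^t$ to that of $y$. We plan to resolve it by the Lemma~\ref{lem3.25}/Lemma~\ref{lem3.4} argument above, which turns it into the weak right nilpotency of the ideal $\mathcal{K}(A)$. The one point to check carefully is that Lemma~\ref{lem3.25} applies with all generators equal to $y$, so that every word produced is a power $y^l$ with $l\ge t$; this holds because all left-normed words in the single generator $y$ are powers of $y$.
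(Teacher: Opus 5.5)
Your argument is correct and takes a different route from the paper's; the actual proof is your last two paragraphs. The paper takes a nonzero r-nil ideal $K\ni y^t$ and applies Lemma~\ref{lem4.1} only to the two elements $y^t,y^{t+1}$, getting $R_{y^t}^Q=R_{y^{t+1}}^Q=0$ on all of $A$. It then uses Lemma~\ref{lem3.2} to write $y^{2(t+1)Q}$ as a sum of terms $aR_{b_1}\cdots R_{b_{r-1}}$ with each $b_i\in\{y^t,y^{t+1}\}$. A pigeonhole count (at least $Q$ of the $b_i$ coincide) together with $R_uR_v=R_vR_u$ then produces a factor $R_{b_i}^Q=0$.

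You instead use Lemma~\ref{lem3.25}, so that the head $a$ also lies in the finite set $\{y^t,\dots,y^{2t}\}\subseteq\mathcal{K}(A)$. You then invoke the weak right nilpotency of $\mathcal{K}(A)$ from Theorem~\ref{t4.2}: each term is a left-normed word of length $q+1$ in that set, hence zero once $q$ is large. In effect you rerun the proof of Lemma~\ref{lem3.4} for the single generator $y$.

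The ingredients are the same, since Theorem~\ref{t4.2} is itself proved by Lemma~\ref{lem4.1}, right commutativity and pigeonhole; you are outsourcing the counting to that theorem. Your version is shorter and more conceptual. The cost is that you need the lower bound $\deg a\ge t$ from Lemma~\ref{lem3.25}. The paper can tolerate an arbitrary short head $a$ via Lemma~\ref{lem3.2}, because Lemma~\ref{lem4.1} gives nilpotency of $R_b$ on the whole algebra, not just on the subalgebra.

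For the write-up, make three changes:
\begin{itemize}
\item Delete the abandoned attempts: the associator computation you only ``expect'' to work, and the ``cleaner alternative'' you drop immediately.
\item Replace the detour through nilpotency of $R_{\bar y}$: $y^s\in\mathcal{K}(A)$ for $s\ge t$ follows directly from $y^s=y^tR_y^{s-t}$ and the fact that $\mathcal{K}(A)$ is an ideal.
\item State the degree count explicitly rather than citing ``as in the proof of Lemma~\ref{lem3.4}''. Each term has total degree $s\le 2t+q(t+1)$, so $q\ge (s-2t)/(t+1)$, which exceeds $N$ for large $s$.
\end{itemize}
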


\begin{proof}
Let $K$ be a nonzero r-nil ideal of the algebra $A$. Then there exists $t>0$ such that $y^{t+r}\in K$ for all $r>0$. By Lemma~\ref{lem4.1}, the operators $R_{y^{t}},R_{y^{t+1}}$ are nilpotent. Their nilpotency indices are bounded by some number $Q$. Then, by Lemma~\ref{lem3.2}, $y^{2(t+1)Q}$ can be written as a sum of elements of the form $aR_{b_1}\dots R_{b_{r-1}}$, where $b_i=y^{t_i}$ for $t\le t_i\le t+1$, $\deg a\le t+1$ and $r>0$. We have 
\[2(t+1)Q \le t+1 + \sum t_i \le (t+1)r,\] 
so $r\ge 2Q$. There are at least $r-1\ge 2Q-1$ numbers $t_i$, each equal to $t$ or $t+1$. Hence at least $Q$ of them are equal. It follows that, by right commutativity, we may assume that there is a subword of the form $R_{b_i}^{Q}$. Therefore $y^{2(t+1)Q}=0$, a contradiction. This proves the lemma.
\end{proof}

An algebra is called \textit{prime} if, for any ideals $I,J$, the condition $IJ=0$ implies $I=0$ or $J=0$.

We will need the following statement.

\begin{lemma}\label{lem4.7}\textup{\cite{P2026}}
    Let $A$ be a Novikov algebra and let $x\in A$ be such that 
    \[(x^n)^2 = (x^{n+1})^2 = 0\] 
    for some $n>0$. Then $x^{2n+2} = 0$.
\end{lemma}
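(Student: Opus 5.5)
The plan is to express every product of two left-normed powers of $x$ through the single power $x^{2n+2}$ and one ``defect'' term, and then use the two hypotheses to get two different expressions for $x^{2n+2}$. Put $c=(x,x,x)$. We use two tools: identity \eqref{eq}, and the identity $(xt,y,z)=(x,y,z)t$ recalled at the beginning of Section~4.

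\textbf{Step 1: a general formula.} Apply \eqref{eq} with $a=x^k$, $b=x$ and $x_1=\dots=x_j=x$. The left-hand side is $x^k(x\,R_x^{\,j})=x^kx^{j+1}$. Each of the $j$ associators equals $(x^k,x,x)$. Iterating $(yt,u,v)=(y,u,v)t$ with $t=x$ gives $(x^k,x,x)=c\,R_x^{\,k-1}$. Hence
\[
x^k x^{j+1}=x^{k+j+1}-j\,c\,R_x^{\,k+j-2}
\]
for all $k\ge1$ and $j\ge0$. I would verify the case $j=0$ and small $k$ separately, but it is consistent.

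\textbf{Step 2: use $(x^{n+1})^2=0$.} Take $k=n+1$ and $j=n$ in the formula. This gives
\[
0=x^{2n+2}-n\,c\,R_x^{\,2n-1}.
\]

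\textbf{Step 3: use $(x^n)^2=0$ via right commutativity.} Since $(yz)x=(yx)z$, we have $0=(x^nx^n)x=(x^nx)x^n=x^{n+1}x^n$. Now apply the formula with $k=n+1$ and $j=n-1$:
\[
0=x^{2n+1}-(n-1)\,c\,R_x^{\,2n-2}.
\]
Multiplying on the right by $x$ gives $x^{2n+2}=(n-1)\,c\,R_x^{\,2n-1}$.

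\textbf{Step 4: combine.} Subtracting the relation of Step 3 from that of Step 2 yields $c\,R_x^{\,2n-1}=0$, with integer coefficient $1$. So no restriction on the characteristic is needed. Substituting back into Step 2 gives $x^{2n+2}=0$.

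The main point to check carefully is Step 1. One must confirm that the left-normed product $x^k(x x\cdots x)$ in \eqref{eq} really is $x^kx^{j+1}$. One must also confirm that the deleted-factor words $x_1\cdots\widehat{x_i}\cdots x_t$ all collapse to $R_x^{\,j-1}$, and that the index shift in $(x^k,x,x)=cR_x^{k-1}$ is right. Everything after that is linear bookkeeping.
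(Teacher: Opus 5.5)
Your proof is correct. Your general formula $x^kx^{j+1}=x^{k+j+1}-j\,cR_x^{\,k+j-2}$ (for $k,j\ge 1$) follows from \eqref{eq} and $(x^{k-1}x,x,x)=(x^{k-1},x,x)x$ exactly as you describe. The two resulting relations $x^{2n+2}=n\,cR_x^{\,2n-1}$ and $x^{2n+2}=(n-1)\,cR_x^{\,2n-1}$ subtract to $cR_x^{\,2n-1}=0$ with coefficient $1$. I also checked the formula on $GD(F[t],\partial_t)$ with $x=t^2$.

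The paper gives no proof of this lemma; it simply quotes it from \cite{P2026}. So there is no in-text argument to compare your route with. Your derivation has two advantages:
\begin{itemize}
\item It is self-contained. It uses only \eqref{eq} and $(xt,y,z)=(x,y,z)t$, both already recalled in the paper, and \eqref{eq} itself follows from $(a,yt,z)=(a,y,z)t$ by induction on $t$. Ultimately everything reduces to right commutativity and left symmetry.
\item It is characteristic-free, indeed valid over any ring of scalars. This matters because the lemma is used in Section~4 with no assumption on the field.
\end{itemize}

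A small simplification: Step~3 does not need right commutativity. For $n\ge 2$, your formula with $k=n$, $j=n-1$ gives $(x^n)^2=x^{2n}-(n-1)\,cR_x^{\,2n-3}$. Applying $R_x^2$ then yields the same relation $x^{2n+2}=(n-1)\,cR_x^{\,2n-1}$. For $n=1$ the hypothesis is $x^2=0$, and the conclusion is immediate.
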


Now we can obtain the structural description of the K\"othe radical.

\begin{theorem}
Let $A$ be a Novikov algebra. Then its K\"othe radical $\mathcal{K}(A)$ is equal to the intersection of all ideals $I$ of $A$ such that the algebra $A/I$ is a prime $\mathcal{K}$-semisimple algebra.
\end{theorem}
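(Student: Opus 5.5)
We prove the two inclusions separately. Let $\mathcal{P}$ denote the intersection of all ideals $I$ of $A$ such that $A/I$ is prime and $\mathcal{K}$-semisimple.

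\emph{Inclusion $\mathcal{K}(A)\subseteq\mathcal{P}$.} Let $I$ be such an ideal. The image of $\mathcal{K}(A)$ in $A/I$ is a homomorphic image of an r-nil algebra, so it is an r-nil ideal of $A/I$. Since $A/I$ is $\mathcal{K}$-semisimple, this image is zero, i.e. $\mathcal{K}(A)\subseteq I$. This gives $\mathcal{K}(A)\subseteq\mathcal{P}$.

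\emph{Inclusion $\mathcal{P}\subseteq\mathcal{K}(A)$.} By Corollary~\ref{c4.3} we may pass to $A/\mathcal{K}(A)$, which is $\mathcal{K}$-semisimple. So it suffices to show: if $\mathcal{K}(A)=0$, then for every $y\ne 0$ there is an ideal $I$ with $y\notin I$ and $A/I$ prime and $\mathcal{K}$-semisimple. Since $\mathcal{K}(A)=0$, the ideal generated by $y$ is not r-nil, so it contains an element $z$ that is not r-nilpotent. Note that $y\notin I$ whenever no power $z^t$ lies in $I$, since $z$ lies in the ideal generated by $y$. By Zorn's lemma, choose an ideal $I$ maximal among ideals containing no power $z^t$, $t>0$; the union of a chain of such ideals again contains no power of $z$. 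Then $\bar z$ is not r-nilpotent in $\bar A=A/I$, and by maximality every nonzero ideal of $\bar A$ contains some $\bar z^{\,t}$. Lemma~\ref{lem4.6} then gives $\mathcal{K}(\bar A)=0$.

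It remains to prove that $\bar A$ is prime, and we expect this to be the main obstacle. Suppose $J_1,J_2$ are nonzero ideals of $\bar A$ with $J_1J_2=0$. Then $J_1\cap J_2$ is a nonzero ideal if both contain powers of $\bar z$; indeed, for Novikov algebras one expects $\bar z^{\,a}\in J_1$ and $\bar z^{\,b}\in J_2$ to imply $\bar z^{\,c}\in J_1\cap J_2$ for large $c$, because $\bar z^{\,c}=\bar z^{\,a}R_{\bar z}^{c-a}$ lies in $J_1$, and symmetrically for $J_2$. So $J=J_1\cap J_2$ is a nonzero ideal with $J^2=0$, and $J$ contains $\bar z^{\,n}$ for all $n\ge n_0$. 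Then $(\bar z^{\,n})^2=(\bar z^{\,n+1})^2=0$, and Lemma~\ref{lem4.7} yields $\bar z^{\,2n+2}=0$, contradicting that $\bar z$ is not r-nilpotent. Hence $\bar A$ is prime.

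The delicate points are the maximality argument and the observation that ideals are closed under right multiplication by $\bar z$, which ensures that all sufficiently large powers of $\bar z$ lie in $J$. Lemma~\ref{lem4.7} is exactly the input that handles the zero-square ideal; without it one would only get $(\bar z^{\,n})^2=0$, which does not force $\bar z$ to be r-nilpotent in a nonassociative algebra.
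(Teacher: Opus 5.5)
Your proof is correct and follows essentially the same route as the paper. You pick a non-r-nilpotent element in the ideal generated by the given element, take by Zorn's lemma an ideal maximal among those avoiding all its powers, derive primeness from Lemma~\ref{lem4.7} applied to two consecutive powers lying in $J_1\cap J_2$, and derive $\mathcal{K}$-semisimplicity from Lemma~\ref{lem4.6}; the only differences (the preliminary reduction to $\mathcal{K}(A)=0$, and arguing primeness inside $A/I$ rather than via $J_1J_2\subseteq I$ in $A$) are cosmetic.
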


\begin{proof}
Let $L$ denote the intersection of all ideals $I_{\alpha}$ of the algebra $A$, $\alpha\in T$, such that the algebra $A/I_{\alpha}$ is a prime $\mathcal{K}$-semisimple algebra. Since $\mathcal{K}(A)$ is the smallest ideal of $A$ such that the algebra $A/\mathcal{K}(A)$ is $\mathcal{K}$-semisimple, we have $\mathcal{K}(A)\subseteq I_{\alpha}$ for every $\alpha\in T$. Hence $\mathcal{K}(A)\subseteq L$.

Conversely, let $x\notin\mathcal{K}(A)$. This means that the ideal $J$ generated by the element $x$ is not weakly right nilpotent. By Theorem~\ref{t4.2}, it follows that the ideal $J$ contains an element $y$ that is not r-nilpotent. Consider the family $S$ of ideals of the algebra $A$ whose intersection with the set $\{y^k\mid k\in\mathbb{N}\}$ is empty. This family is nonempty and inductive; hence, by Zorn's lemma, $S$ contains a maximal element $I$. Let $J_1$ and $J_2$ be ideals of the algebra $A$ such that $J_1J_2\subseteq I$. Suppose that $I\subsetneq J_1$ and $I\subsetneq J_2$. Then there exists an integer $m>0$ such that $y^m\in J_1\cap J_2$. But then $y^{m+1}\in J_1\cap J_2$, and $(y^m)^2\in I$, $(y^{m+1})^2\in I$. By Lemma~\ref{lem4.7} $y^{2m+2}\in I$, a contradiction. Thus the algebra $A/I$ is prime. By construction, every nonzero ideal of the algebra $A/I$ contains some power of the element $y+I$, and $y+I$ is not an r-nilpotent element. By Lemma~\ref{lem4.6}, $\mathcal{K}(A/I)=0$. Since $y\notin I$, we have $x\notin I$. Therefore $L\subseteq\mathcal{K}(A)$. This proves the theorem.
\end{proof}

\begin{corollary}
A $\mathcal{K}$-semisimple Novikov algebra is a subdirect product of prime $\mathcal{K}$-semisimple Novikov algebras.
\end{corollary}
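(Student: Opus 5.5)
The corollary follows from the preceding theorem combined with the standard correspondence between intersections of ideals and subdirect products. Let $A$ be a $\mathcal{K}$-semisimple Novikov algebra, so $\mathcal{K}(A)=0$. By the theorem, $\mathcal{K}(A)$ is the intersection $L=\bigcap_{\alpha\in T} I_\alpha$ of all ideals $I_\alpha$ of $A$ for which $A/I_\alpha$ is a prime $\mathcal{K}$-semisimple algebra. Hence $\bigcap_{\alpha\in T} I_\alpha=0$.

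Next I will consider the natural homomorphism
\[\varphi\colon A\to\prod_{\alpha\in T}A/I_\alpha,\qquad a\mapsto (a+I_\alpha)_{\alpha\in T}.\]
Its kernel is $\bigcap_\alpha I_\alpha=0$, so $\varphi$ is injective. Each composition of $\varphi$ with the projection onto $A/I_\alpha$ is the canonical surjection $A\to A/I_\alpha$. By definition, this means $A$ is a subdirect product of the algebras $A/I_\alpha$, each of which is prime and $\mathcal{K}$-semisimple by the choice of the $I_\alpha$. The factors are Novikov algebras because the variety of Novikov algebras is closed under homomorphic images.

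The one point needing care is the degenerate case $A=0$, where $T$ may be empty (or consist only of $I=A$, if one's definition of prime allows the zero algebra). An empty product is the zero algebra, so the statement still holds trivially. For $A\neq 0$ the theorem forces $T\neq\emptyset$, since otherwise the empty intersection would equal $A$, which is impossible when $\mathcal{K}(A)=0\neq A$. I do not expect any real obstacle here: all the substantive work, namely the Zorn's lemma construction, primeness via Lemma~\ref{lem4.7}, and semisimplicity via Lemma~\ref{lem4.6}, is already done in the theorem.
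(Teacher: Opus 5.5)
Your proposal is correct and is exactly the argument the paper intends: the paper states this corollary without proof as an immediate consequence of the preceding theorem, and you supply the standard step that $\mathcal{K}(A)=\bigcap_{\alpha} I_\alpha=0$ makes the diagonal map $A\to\prod_\alpha A/I_\alpha$ an injective subdirect embedding. Your handling of the degenerate case is fine; under the paper's definition the zero algebra is vacuously prime, so $I=A$ always belongs to the family, which is harmless.
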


\section{An Example of a Prime r-nil Novikov Algebra}

In this section, we construct an example of a prime r-nil Novikov algebra over a field of positive characteristic. This example will show that the inclusion of the Baer radical into the K\"othe radical is strict.

\begin{example}
    Let $F$ be a field of characteristic $p>0$, $B = F[x_1,x_2,\dots]$ be an associative and commutative polynomial ring in countably many variables with a unit, $I=(x_1^p,x_2^p,\dots)$ and $A=B/I$. We can define a derivation $D:B\to B$ by $D(x_i)=x_{i+1}$ and $D(fg)=D(f)g+fD(g)$. It is easy to see that $D(I)\subseteq I$, so we can consider an induced derivation $d:A\to A$. We denote the algebra $GD(A,d)$ by $\mathrm{Sk}_p$.
\end{example}

For $x\in\mathrm{Sk}_p$ we use a notation 
\[T_{x}(a)=a\circ x.\]

\begin{lemma}\label{lem5.2}
Let $J$ be an ideal of $\mathrm{Sk}_p$ and let $w=x_{1}^{t_1}\dots x_{n}^{t_n}\in J$. Then for any $s>0$ there exist $q(s)\ge s$ and $0\le k_{j}\le p-1$ such that \[t=x_s^{k_s}\dots x_{q(s)}^{k_{q(s)}}\in J\] 
and $k_{q(s)}>0$.
\end{lemma}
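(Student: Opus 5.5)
The plan is to use only two closure properties of an ideal $J$ of $\mathrm{Sk}_p=GD(A,d)$, both coming from $x\circ y=x\,d(y)$ and the unit $1\in A$. First, for $j\in J$ we have $1\circ j=d(j)\in J$, so $J$ is $d$-stable. Second, for $j\in J$ and $k\ge 2$ we have $j\circ x_{k-1}=j\,x_k\in J$, so $J$ is closed under multiplication by every $x_k$ with $k\ge 2$. We may assume $w\neq 0$ in $A$, so all $t_i\le p-1$. We argue by induction on $s$, keeping the invariant that $J$ contains a monomial whose variables all have index $\ge s$ and all of whose exponents are at most $p-1$. For $s=1$ the monomial $w$ itself works, after deleting trailing zero exponents so that the last exponent is positive.

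The induction step removes the lowest variable. Suppose $u=x_s^{a}m\in J$ with $1\le a\le p-1$ and $m=x_{s+1}^{e_{s+1}}\cdots x_q^{e_q}$ with all $e_k\le p-1$; if $a=0$ there is nothing to do. We compute
\[
d(u)=a\,x_s^{a-1}x_{s+1}m+x_s^{a}d(m),
\]
and then multiply by the saturating factor
\[
P=x_{s+2}^{p-1-e_{s+2}}\cdots x_q^{p-1-e_q}\,x_{q+1}^{p-1}.
\]
This is allowed because all indices in $P$ are $\ge s+2\ge 2$. Every monomial of $d(m)$ arises by replacing one factor $x_k$ of $m$ (with $k\ge s+1$) by $x_{k+1}$, so it raises the exponent of some $x_{k+1}$ with $s+2\le k+1\le q+1$ by one. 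After multiplication by $P$ that exponent reaches $p$, so the term vanishes in $A$. The first term survives, with all exponents of $x_{s+2},\dots,x_{q+1}$ equal to $p-1$. Thus
\[
a\,x_s^{a-1}\,x_{s+1}^{e_{s+1}+1}x_{s+2}^{p-1}\cdots x_{q+1}^{p-1}\in J .
\]
If $e_{s+1}+1=p$, this element is zero and the step fails; this is the one point needing care. To avoid it, before differentiating we arrange $e_{s+1}$ small. I would handle this by first pushing the $x_{s+1}$-exponent upward only when needed, or more simply by reversing the roles and performing the same elimination on $x_s$ while differentiating enough times; I would check this bookkeeping carefully. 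Since $a\not\equiv 0 \pmod p$, we can divide by $a$.

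Iterating this step $a$ times lowers the exponent of $x_s$ to $0$. The result is a monomial in $J$ in variables $x_{s+1},\dots,x_{q'}$, with last exponent $p-1>0$ coming from the saturated top variable $x_{q'}$. This completes the induction, and the statement follows with $q(s)=q'\ge s$.

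The main obstacle is the isolation step: extracting a single monomial from the sum $d(u)$ while staying inside $J$. The saturation trick handles it, but the exponent of $x_{s+1}$ must not hit $p$. My first attempt at that point is to observe that $x_{s+1}$ is never multiplied by $P$. So if $e_{s+1}=p-1$, I would first apply the same one-step elimination one level up to lower the exponent of $x_{s+1}$ before treating $x_s$. Alternatively, I would process variables from the top downward in each round so that $x_{s+1}$ always has room.
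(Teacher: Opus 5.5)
Your mechanism is the same as the paper's. You use $1\circ j=d(j)$ and $j\circ x_{k-1}=j\,x_k$, differentiate, and then multiply by a saturating monomial so that all but one term of $d(u)$ picks up an exponent $p$. Your single step is correct when $e_{s+1}\le p-2$. But there is a genuine gap exactly where you flagged it, and it is not a corner case. Your step turns $x_s^{a}x_{s+1}^{e_{s+1}}\cdots x_q^{e_q}$ into a multiple of $x_s^{a-1}x_{s+1}^{e_{s+1}+1}x_{s+2}^{p-1}\cdots x_{q+1}^{p-1}$. So each application raises the exponent of $x_{s+1}$ by one and saturates everything above it. Hence whenever $a+e_{s+1}\ge p$ (e.g.\ $u=x_sx_{s+1}^{p-1}$), your claim that iterating the step $a$ times removes $x_s$ is false. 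You get stuck at $x_s^{a'}x_{s+1}^{p-1}x_{s+2}^{p-1}\cdots x_{q'}^{p-1}$ with $a'>0$, where the only term of $d$ that lowers $x_s$ vanishes. Your first repair (``the same elimination one level up'') also fails here: lowering $x_{s+1}$ needs room in $x_{s+2}$, which is saturated, and so on up to the top.

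The missing idea is to create room at the top and push it down; this is exactly what the paper's proof does inside its induction on the exponent of the lowest variable.
\begin{itemize}
\item For the saturated monomial $v=x_s^{a}x_{s+1}^{p-1}\cdots x_q^{p-1}$, the only nonzero term of $d(v)$ is the one differentiating $x_q$. So $d(v)\,x_{q+1}^{p-2}=-x_s^{a}x_{s+1}^{p-1}\cdots x_{q-1}^{p-1}x_q^{p-2}x_{q+1}^{p-1}\in J$, which has a single ``defect'' exponent $p-2$ at $x_q$.
\item If the defect sits at $x_j$ with $j>s+1$, exactly two terms of $d$ survive. One is the shift $x_{j-1}\to x_j$, which moves the defect to $x_{j-1}$. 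The other is the shift of the top variable into a fresh one, and multiplying by the $(p-1)$-st power of the fresh variable kills it.
\item After finitely many rounds the defect is at $x_{s+1}$. Then your step applies and lowers $a$ by one, since the coefficient $a$ is invertible mod $p$. Everything above $x_s$ is saturated again, and you repeat.
\end{itemize}
So your second alternative (``process from the top downward'') is the right fix, but it must actually be carried out. That includes checking exactly which terms of $d$ survive after saturation, and this cascade is the real content of the lemma. A minor point: your factor $P$ is wrong when $q=s$, where it equals $x_{s+1}^{p-1}$ and kills the wanted term; in that case $d(u)$ is already a monomial and needs no $P$.
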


\begin{proof}
We will prove the statement by induction on $s$. If $s=1$ then the statement is proved. Let $s>1$. Then the induction step allows us to assume that there is an element $t=x_{s-1}^{r_1}\dots x_{s+n-2}^{r_n}\in J$. We will prove the statement by the second induction on $r_1$. If $r_1=0$ then we are done. Let $r_1>0$. We have 
\[w_1=wT_{x_{s-1}}^{p-1-r_2}\dots T_{x_{s+n-3}}^{p-1-r_n}\in J.\] 
Let us notice that $w_1=x_{s-1}^{r_1}x_s^{p-1}\dots x_{s+n-2}^{p-1}$. Then $d(w_1)=1\circ w_1 \in J$. Let us notice that 
\[d(w_1) = x_{s-1}^{r_1}x_s^{p-1}\dots x_{s+n-2}^{p-2}x_{s+n-1}.\]
Then
\[w_2=d(w_1)T_{x_{s+n-2}}^{p-2}=x_{s-1}^{r_1}x_s^{p-1}\dots x_{s+n-3}^{p-1}x_{s+n-2}^{p-2} x_{s+n-1}^{p-1}\in J.\] 
Let us prove by induction that
\[x_{s-1}^{r_1}x_s^{p-1}\dots x_{s+n-k-3}^{p-1}x_{s+n-k-2}^{p-2}x_{s+n-k-1}^{p-1}\dots x_{s+n-1+k}^{p-1}\in J.\]
Indeed, if
\[w_3=x_{s-1}^{r_1}x_{s}^{p-1}\dots x_{s+n-k-2}^{p-1}x_{s+n-k-1}^{p-2}x_{s+n-k}^{p-1}\dots x_{s+n+k-1}^{p-1}\in J\]
then
\[w_4 = d(w_3)T_{x_{s+n+k-1}}^{p-2} \in J.\]
It is enough to notice that $w_4=x_{s-1}^{r_1}x_s^{p-1}\dots x_{s+n-k-3}^{p-1}x_{s+n-k-2}^{p-2}x_{s+n-k-1}^{p-1}\dots x_{s+n+k-1}^{p-1}$. If $k=n-1$ then
\[w_5=x_{s-1}^{r_1-1}x_s^{p-1}\dots x_{s+2n-2}^{p-1}\in J.\]
Using the induction on $r_1$ we can get $r_1=0$.
This proves the lemma.
\end{proof}

\begin{lemma}\label{lem5.25}
Let $I$ be an ideal in $\mathrm{Sk}_p$ and $0\neq x\in I$. Then $I$ contains an element $u=x_1^{p-1}x_2^{p-1}\dots x_k^{p-1}\in I$.
\end{lemma}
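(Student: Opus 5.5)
Write $A=B/(x_1^p,x_2^p,\dots)$. It has a basis of reduced monomials $x_1^{t_1}\dots x_n^{t_n}$ with $0\le t_i\le p-1$. In $\mathrm{Sk}_p$ the ideal $I$ is closed under the operators $T_v\colon a\mapsto a\circ v=a\,d(v)$ and $a\mapsto v\circ a=v\,d(a)$. In particular $d(a)=1\circ a\in I$, and $a\,x_{j+1}=a\circ x_j\in I$ for $a\in I$ and all $j\ge 1$.

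So $I$ is closed under $d$ and under multiplication by any $x_j$ with $j\ge 2$. More generally it is closed under multiplication by any element of $d(A)$. The plan has three steps: extract a single monomial from $x$, feed it to Lemma~\ref{lem5.2}, and then fill in the remaining exponents.

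\textbf{Step 1: from $x$ to a monomial in $I$.} Write $0\ne x=\sum_\mu c_\mu \mu$ as a combination of reduced monomials. Choose an index $N$ larger than every variable occurring in $x$. Multiplying by $x_{j}$ with $j\ge 2$ kills every monomial that already contains $x_j^{p-1}$ and raises the exponent of $x_j$ in the others. I will multiply $x$ by suitable powers of $x_2,\dots,x_N$ to separate the monomials. Fix a monomial $\mu_0$ of $x$ that is lexicographically extremal in the exponents of $x_N,x_{N-1},\dots,x_2$. Multiply by $x_N^{p-1-e_N(\mu_0)}$: this kills every monomial with a larger $x_N$-exponent and sets all survivors to exponent $p-1$ in $x_N$. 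Repeat downward for $x_{N-1},\dots,x_2$, choosing $\mu_0$ to maximize the exponents in this order.

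The result is $(\sum_{\mu} c_\mu x_1^{e_1(\mu)})\cdot x_2^{p-1}\cdots x_N^{p-1}$, where the sum runs over the monomials agreeing with $\mu_0$ in $x_2,\dots,x_N$. So we obtain $g(x_1)\,x_2^{p-1}\cdots x_N^{p-1}\in I$ with $0\ne g$ a polynomial in $x_1$ of degree $<p$.

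To remove the $x_1$-dependence, I apply $d$. Since $d(x_N^{p-1})$ involves $x_{N+1}$ and can then be saturated, I will instead use Lemma~\ref{lem5.2}'s mechanism directly. The choice I commit to is this: apply the operator $a\mapsto d(a)$ followed by the appropriate $T$'s, which lowers the top $x_1$-degree of $g$ by one, exactly as in the proof of Lemma~\ref{lem5.2}. Iterating $\deg g$ times leaves a nonzero scalar multiple of a monomial $x_1^{t_1}\cdots x_n^{t_n}\in I$. This is the step I expect to be the main obstacle: one must check that the leading coefficient survives. Differentiation produces the factor $r_1$, nonzero mod $p$ since $r_1<p$, and the lower terms must not collide with the leading one. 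I would control this by tracking the lex-leading monomial throughout.

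\textbf{Step 2: apply Lemma~\ref{lem5.2}.} Once a monomial $w\in I$ is obtained, Lemma~\ref{lem5.2} with $s=2$ gives $x_2^{k_2}\cdots x_{q}^{k_q}\in I$, a monomial free of $x_1$. Better, apply Lemma~\ref{lem5.2} with $s=1$ combined with its internal trick: $d$ followed by $T$'s converts exponents of early variables to $p-1$ patterns.

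\textbf{Step 3: raise every exponent to $p-1$.} Take the monomial $v=x_2^{k_2}\cdots x_q^{k_q}\in I$ without $x_1$. Multiply by $x_j^{p-1-k_j}$ for $2\le j\le q$, which is allowed since $j\ge 2$; this gives $x_2^{p-1}\cdots x_q^{p-1}\in I$. It remains to produce the factor $x_1^{p-1}$. For this, use $v'=x_1^{p-1}\circ v=x_1^{p-1}d(v)\in I$, where $v$ is now chosen as a monomial with a $p-1$ pattern. Compute $d(x_2^{p-1}\cdots x_q^{p-1})=-x_2^{p-1}\cdots x_q^{p-2}x_{q+1}$, the other terms vanishing after saturation. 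Hence $x_1^{p-1}x_2^{p-1}\cdots x_q^{p-2}x_{q+1}\in I$, and multiplying by $x_q$ and $x_{q+1}^{p-2}$ yields $u=x_1^{p-1}\cdots x_{q+1}^{p-1}\in I$, as required. If some cross terms of $d$ do not vanish automatically, I would kill them by first saturating all variables except the last one, as in the computation of $w_1,w_2$ in Lemma~\ref{lem5.2}.
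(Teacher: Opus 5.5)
The gap is in the second half of your Step 1, exactly where you flag the ``main obstacle'', and the mechanism you commit to does not work as stated. For $y=g(x_1)\,x_2^{p-1}\cdots x_N^{p-1}\in I$ one has
\[
d(y)=g'(x_1)\,x_2\cdot x_2^{p-1}\cdots x_N^{p-1}+g(x_1)\,d\bigl(x_2^{p-1}\cdots x_N^{p-1}\bigr)=(p-1)\,g(x_1)\,x_2^{p-1}\cdots x_{N-1}^{p-1}x_N^{p-2}x_{N+1}.
\]
The saturated factor $x_2^{p-1}$ kills the $g'$-term, and the interior terms die the same way. The operators $T_v$ only multiply by elements of $d(A)$, and multiplication never lowers an exponent. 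So one application of $d$ followed by $T$'s cannot lower $\deg g$. To get a surviving derivative of $g$ in the style of Lemma~\ref{lem5.2}, you would first have to walk an exponent-$(p-2)$ ``hole'' from $x_N$ down to $x_2$. At every stage you would also have to dispose of the extra term that differentiation creates at the tail. None of this is carried out. Since Lemma~\ref{lem5.2} takes a monomial as input, Steps 2 and 3 have nothing to start from, and the argument is incomplete.

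The missing idea is one you already use in Step 3. Left multiplication $v\circ a=v\,d(a)$ lets you multiply $d(a)$, for $a\in I$, by arbitrary elements, in particular by powers of $x_1$. You do not need to lower the top degree of $g=\sum_j\alpha_jx_1^j$; instead, isolate its lowest term. With $r_1=\min\{j\mid\alpha_j\neq0\}$,
\[
x_1^{p-1-r_1}\circ y=(p-1)\,\alpha_{r_1}\,x_1^{p-1}x_2^{p-1}\cdots x_{N-1}^{p-1}x_N^{p-2}x_{N+1}\in I,
\]
since every higher power of $x_1$ is annihilated. Multiplying by $x_N=d(x_{N-1})$ and by $x_{N+1}^{p-2}=d(x_N)^{p-2}$ then yields $u$. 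This is exactly the paper's proof. It makes both the detour through Lemma~\ref{lem5.2} and your separate Step 3 unnecessary; Step 3 is this same computation for constant $g$.

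Two smaller remarks. In the separation procedure you must choose at each stage the \emph{smallest} exponent (the paper's $r_m=\min\{\dots\}$), not the largest; otherwise the survivors are not all raised to $p-1$. On the other hand, taking $N\ge2$ beyond every variable of $x$ is a nice touch, since it absorbs the paper's separate cases $N=0$ and $N=1$.
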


\begin{proof}
Let $I$ be an ideal in $\mathrm{Sk}_p$ and $0\neq x\in I$. Then there exists $N\ge 0$ such that 
\[x=\sum\limits_{i\in M}\alpha_i x_1^{k_{1,i}}\dots x_N^{k_{N,i}},\] 
where $0\le k_{m,i}<p$ for all $1\le m\le N$ and $i\in M$ and $k_{N,i}\neq 0$ for some $i\in M$ and $\alpha_i\neq 0$ for any $i\in M$. 

If $N=0$ then $1\in I$ and $1\circ x_1 = x_2\in I$. 

If $N=1$ then 
\[g=\sum\limits_{i=0}^{p-1}\alpha_i x_1^i\in I\] 
for some $\alpha_i\in F$. Then $g\circ x_1 = \sum\limits_{i=0}^{p-1}\alpha_i x_1^ix_2\in I$. So, we can assume that $N\ge 2.$

Define a number $r_2=\min\{k_{2,i}\mid i\in M\}$. Then an element $w_2=xT_{x_1}^{p-1-r_2}$ can be represented in the following way: $w_2=x_2^{p-1}h_2$, where $h_2$ does not contain $x_2$. We define a number $r_{3}=\min \{k_{3,i}\mid k_{2,i}=r_2\}.$ Then an element $w_3=w_2T_{x_2}^{p-1-r_3}$ can be represented in the following way: $w_3=x_2^{p-1}x_3^{p-1}h_3$, where $h_3$ does not contain $x_3$. We will recursively repeat this process by $w_{k+1}=w_{k}T_{x_{k}}^{p-1-r_{k+1}}$ and $r_{m}=\min\{k_{m,i}\mid k_{2,i}=r_2,\dots,k_{m-1,i}=r_{m-1}\}$. Then $w_N=x_2^{p-1}\dots x_N^{p-1}h_N$, where $h_N=\sum\limits_{j=0}^{p-1} \alpha_j x_1^{j}$ and $\alpha_j\neq 0$ for some $p-1\ge j\ge 0$. If $r_1=\min\{j\mid \alpha_j\neq 0\}$ then
\[w_{N+1}=x_1^{p-1-r_1}\circ w_N =  \alpha_{r_1}(p-1)x_1^{p-1}\dots x_{N-1}^{p-1}x_N^{p-2}x_{N+1}\in I.\]
It follows that
\[w_{N+2} = \frac{1}{(p-1)\alpha_{r_1}}w_{N+1} T_{x_{N}}^{p-2}T_{x_{N-1}} = x_1^{p-1}\dots x_{N+1}^{p-1}\in I.\]
This proves the lemma.
\end{proof}

\begin{theorem}\label{t5.3}
A Novikov algebra $\mathrm{Sk}_p$ satisfies the following statements:
\begin{enumerate}
    \item $\mathrm{Sk}_p$ is generated by $p$ elements;
    \item $\mathrm{Sk}_p$ is a prime algebra;
    \item $\mathrm{Sk}_p$ is an r-nil algebra of bounded index $p+1$;
    \item $\mathrm{Sk}_p$ is a weakly solvable algebra.
\end{enumerate}
\end{theorem}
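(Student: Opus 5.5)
The plan is to take the four items in order, using throughout the explicit form of the product $a\circ b=a\,d(b)$ on $A=B/I$, whose monomials $x_1^{k_1}\cdots x_N^{k_N}$ with $0\le k_i\le p-1$ form a basis.

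\emph{Item 1.} I claim $\mathrm{Sk}_p$ is generated by $1,x_1,x_1^2,\dots,x_1^{p-1}$. First, $1\circ x_j=d(x_j)=x_{j+1}$, so by induction every $x_j$ lies in the generated subalgebra $C$. Next, $u\circ x_j=u\,x_{j+1}$ for every $u$. Hence, starting from $x_1^k$ with $0\le k\le p-1$ and multiplying on the right successively by suitable $x_{j-1}\in C$, we obtain every basis monomial $x_1^k x_2^{a_2}\cdots x_N^{a_N}$. The factor $x_1^k$ must be a generator, because the image of $d$ never contains a monomial that is a pure power of $x_1$. This gives exactly $p$ generators.

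\emph{Item 2 (primeness).} Let $I,J$ be nonzero ideals. By Lemma~\ref{lem5.25}, $I$ contains $u=x_1^{p-1}\cdots x_k^{p-1}$. By Lemma~\ref{lem5.2} applied to a monomial of $J$ (again supplied by Lemma~\ref{lem5.25}) with $s=k+1$, $J$ contains a monomial $w=x_{k+1}^{k_{k+1}}\cdots x_q^{k_q}$ with $k_q>0$. Then
\[
u\circ w=u\,d(w)=u\sum_j k_j x_j^{k_j-1}x_{j+1}(\cdots).
\]
The term coming from $j=q$ is $k_q\,x_q^{k_q-1}x_{q+1}\cdot(\text{rest})$. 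It is a basis monomial with nonzero coefficient ($0<k_q<p$) that does not cancel against the other terms. It also involves only variables with indices $>k$, so multiplying by $u$ keeps it nonzero. Hence $I\circ J\neq0$. This is the step where I must check carefully that no cancellation in $d(w)$ occurs and that the multiplication by $u$ does not kill the distinguished term. Since the variables of $u$ and $w$ are disjoint, this check looks routine.

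\emph{Item 3.} An induction gives $x^n=x\,d(x)^{n-1}$ for the left-normed powers, since $x^{n}=x^{n-1}\circ x=x^{n-1}d(x)$. The element $d(x)$ has zero constant term, because $d$ of any monomial is a sum of monomials of positive degree. In characteristic $p$ the Frobenius map is additive on the commutative algebra $A$, and every monomial $m$ of positive degree satisfies $m^p=0$. Therefore $d(x)^p=0$ and $x^{p+1}=0$ for all $x$.

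\emph{Item 4.} Let $\mathfrak m$ be the ideal of $A$ of elements with zero constant term. By Leibniz, $d(\mathfrak m^k)\subseteq\mathfrak m^k$ and $d(A)\subseteq\mathfrak m$. By induction, $S_n(y_1,\dots,y_{2^n})\in\mathfrak m^{2^{n-1}}$ for $n\ge1$: indeed $S_1=y_1d(y_2)\in\mathfrak m$ and $S_n=S_{n-1}\,d(S'_{n-1})\in\mathfrak m^{2^{n-2}}\mathfrak m^{2^{n-2}}$. Given finitely many $y_i$, they all lie in $F[x_1,\dots,x_N]/I$. The element $S_n$ involves at most $n$ nested applications of $d$, so it lies in the subalgebra on $x_1,\dots,x_{N+n}$. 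In that subalgebra $\mathfrak m^{(N+n)(p-1)+1}=0$. Since $2^{n-1}>(N+n)(p-1)$ for large $n$, we get $S_n=0$, which gives weak solvability.
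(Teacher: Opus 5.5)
Your proof is correct. For items 1, 2 and 4 it is the paper's argument:
\begin{itemize}
\item the same generating set $\{1,x_1,\dots,x_1^{p-1}\}$;
\item the same use of Lemma~\ref{lem5.25} and Lemma~\ref{lem5.2} to place $u=x_1^{p-1}\cdots x_k^{p-1}$ in $I$ and a monomial in $x_{k+1},\dots,x_q$ in $J$;
\item the same degree count for $S_n$.
\end{itemize}
You supply details the paper leaves out. Your non-cancellation check via the $x_{q+1}$-term of $d(w)$ justifies the paper's ``it is easy to see that $u\circ t\neq 0$''. Your $d$-stable filtration $\mathfrak m^k$ proves cleanly the paper's claim that every summand of $S_n$ has degree at least $2^{n-1}$.

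The genuine difference is item 3. The paper only cites \cite{P2026} for it, whereas you prove it in two lines from $x^n=x\,d(x)^{n-1}$, $d(A)\subseteq\mathfrak m$ and additivity of the Frobenius map. This makes the theorem self-contained and shows why the bound is $p+1$. It also shows the bound cannot be lowered: the $p$-th right power of $x_1$ is $x_1\,d(x_1)^{p-1}=x_1x_2^{p-1}\neq 0$.
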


\begin{proof}
The fact that $\mathrm{Sk}_p$ is an r-nil algebra of bounded index was proved in~\cite{P2026}. 

The algebra $\mathrm{Sk}_p$ is generated by a set $M=\{1,x_1,\dots, x_1^{p-1}\}$. Indeed, let $N$ be a subalgebra, generated by $M$.  Let us prove by induction on degree of $g=x_1^{k_1}\dots x_m^{k_m}\in\mathrm{Sk}_p$ that $g\in N$, $k_m\ge 1$. If $g=x_m$ then $x_m=1\circ x_{m-1}$ and induction on $m$ proves that $x_m\in N$ since $x_1\in N$. If $g=x_1^k$ then $g\in M\subseteq N$. So, we can assume that $m\ge 2$. We have $g_1=x_1^{k_1}\dots x_m^{k_m-1} \in N$, so $g=g_1\circ x_{m-1}\in N$.

Let us prove that it is prime. Let $I$ and $J$ be nonzero ideals in $\mathrm{Sk}_p$. Let $0\neq u\in I$ and let $0\neq v\in J$. By Lemma~\ref{lem5.25} we can assume that $u=x_1^{p-1}x_2^{p-1}\dots x_k^{p-1}$ and $v=x_1^{p-1}x_2^{p-1}\dots x_m^{p-1}$ for some $k,m>1$. By Lemma~\ref{lem5.2} there is an element $t=x_{k+1}^{i_{k+1}}\dots x_{q}^{i_q}\in J$ for some $q > k$. It is easy to see that $u\circ t\neq 0$, so $IJ\neq 0$.

Let us prove weak solvability. Let $f_1,\dots,f_k\in\mathrm{Sk}_p$. Then there is $m\in\mathbb{N}$ such that $f_1,\dots,f_k\in F[x_1,\dots,x_m]$. Let $S_n$ be the polynomial of the solvable identity. 

We have $S_1(x,y)=x\circ y$, so $S_1(x,1)=0$. Hence, a nonzero value of $S_n$ is possible only if the right argument in every bottom pair is not a constant. Hence at least the elements $f_{i_{2k}}$ are non-constants for $0<k\le 2^{n-1}$. Hence, $S_n(f_{i_1},\dots,f_{i_{2^n}})$ has a minimum degree of all its summands at least $2^{n-1}$. We have $S_n(f_{i_1},\dots,f_{i_{2^n}})\in F[x_1,\dots,x_{n+m}]$, hence some variable $x_i$ occurs with degree greater than $2^{n-1}/(m+n)$. We choose $n\in\mathbb{N}$ such that $2^{n-1}>p(m+n)$. Then $S_n(f_{i_1},\dots,f_{i_{2^n}})=0$ and $\mathrm{Sk}_p$ is a weakly solvable algebra. This proves the theorem.
\end{proof}

Let us denote the weakly solvable radical of Novikov algebras by $\mathcal{WS}$. Theorem~\ref{t5.3} and Corollary~\ref{c4.4} immediately imply the following result.

\begin{corollary}
In the variety of Novikov algebras over a field of characteristic $p>0$ we have a strict inclusion
\[\mathcal{B}\subsetneq \mathcal{K}\cap\mathcal{WS}.\]
\end{corollary}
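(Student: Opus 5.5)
The plan is to use the algebra $\mathrm{Sk}_p$ from Theorem~\ref{t5.3} as the witness, working in the variety of Novikov algebras over a field of characteristic $p>0$. The inclusion $\mathcal{B}\subseteq\mathcal{K}\cap\mathcal{WS}$ has two parts. First, $\mathcal{B}\subseteq\mathcal{K}$ by Corollary~\ref{c4.4}. Second, $\mathcal{B}\subseteq\mathcal{WS}$: every Baer radical algebra is built by a transfinite chain of trivial ideals, and each trivial ideal is solvable with $S_1=0$, hence weakly solvable. The weakly solvable algebras form a radical class (Parfenov's result~\cite{P1971}, valid for all nonassociative algebras). A radical class is closed under extensions and under unions of ascending chains of ideals. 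So transfinite induction along the Baer chain shows that $\mathcal{B}(A)$ is weakly solvable. Concretely, at a successor step the quotient $\mathcal{B}_\alpha/\mathcal{B}_{\alpha-1}$ is a sum of trivial ideals, hence weakly solvable, and we apply the extension property. At a limit step we use locality: a finite subset lies in some earlier $\mathcal{B}_\beta$. Hence every Baer radical algebra lies in $\mathcal{K}\cap\mathcal{WS}$.

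For strictness, we show that $\mathrm{Sk}_p\in\mathcal{K}\cap\mathcal{WS}$ but $\mathrm{Sk}_p\notin\mathcal{B}$. Membership in $\mathcal{K}$ and in $\mathcal{WS}$ is exactly items 3 and 4 of Theorem~\ref{t5.3}. To show that $\mathrm{Sk}_p$ is not Baer radical, we use primeness (item 2) together with $\mathrm{Sk}_p\ne 0$. In a prime algebra every trivial ideal $T$ satisfies $TT=0$, which forces $T=0$. Hence $\mathcal{B}_1(\mathrm{Sk}_p)=0$, and inductively the whole Baer chain is zero, so $\mathcal{B}(\mathrm{Sk}_p)=0\neq\mathrm{Sk}_p$. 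Thus $\mathrm{Sk}_p$ is a nonzero $\mathcal{B}$-semisimple algebra that is $\mathcal{K}\cap\mathcal{WS}$-radical, and the inclusion is strict. We should also check nontriviality: $\mathrm{Sk}_p\neq0$, for instance because $1\circ x_1=x_2\ne 0$.

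The main obstacle is the inclusion $\mathcal{B}\subseteq\mathcal{WS}$. It relies on $\mathcal{WS}$ being a radical class in the Novikov variety. For that we cite Parfenov's theorem, as the paper's remark in Section~2 does, and invoke its closure under extensions. If one prefers a self-contained argument, one can instead note that a trivial extension preserves weak solvability directly. If $\overline F$ is solvable of index $t$, then all $S_t$-values lie in the ideal, and $S_{t+1}$-values are products of two such values. That product vanishes once the ideal is trivial, which handles each successor step of the Baer chain.
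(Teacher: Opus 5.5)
Your proof is correct and follows the paper's route: the witness is $\mathrm{Sk}_p$, items 3 and 4 of Theorem~\ref{t5.3} place it in $\mathcal{K}\cap\mathcal{WS}$, and primeness kills every trivial ideal, so $\mathcal{B}(\mathrm{Sk}_p)=0$. You also justify the non-strict inclusion $\mathcal{B}\subseteq\mathcal{WS}$, which the paper's proof leaves implicit, and your main argument for it is sound: Parfenov's radical theorem, closure of radical classes under extensions and ascending unions, and transfinite induction along the Baer chain.

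One caveat concerns only your optional self-contained fallback, which as written covers the wrong case. In the Baer chain the trivial layer is the quotient $\mathcal{B}_\alpha/\mathcal{B}_{\alpha-1}$, a sum of trivial ideals, while the ideal $\mathcal{B}_{\alpha-1}$ is only weakly solvable, not trivial. So instead of invoking triviality of the ideal, argue as follows. A finite sum of trivial ideals is solvable, which gives the index $t$. The finitely many $S_t$-values of $F$ then lie in the weakly solvable ideal $\mathcal{B}_{\alpha-1}$, so some $S_N$ of them vanishes, and $S_{t+N}$ vanishes on $F$.
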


\begin{proof}
The algebra $\mathrm{Sk}_p$ is r-nil (Theorem~\ref{t5.3}, item 3), hence $\mathrm{Sk}_p\in\mathcal{K}$. However, $\mathrm{Sk}_p$ is a prime algebra (Theorem~\ref{t5.3}, item 2), therefore $\mathcal{B}(\mathrm{Sk}_p)=0$, meaning $\mathrm{Sk}_p\notin\mathcal{B}$.
\end{proof}

\section{Conclusion}

We proved that the K\"othe radical exists in Novikov algebras. We showed that the class of weakly right nilpotent Novikov algebras coincides with the class of r-nil Novikov algebras. Moreover, we construct an example of a prime r-nil Novikov algebra over a field of positive characteristic, demonstrating that the K\"othe radical strictly contains the Baer radical. It is the main difference from commutative or Lie-solvable case \cite{P2026}, where the Baer radical and the K\"othe radical coincide.

It remains an open question whether the inclusion $\mathcal{B}\subseteq\mathcal{K}$ is strict over a field of characteristic $0$.

\section*{Acknowledgments}

This research is supported by the Russian Science Foundation (project 26-71-00028).

\noindent Alexander Panasenko \\
Sobolev Institute of Mathematics \\
Acad. Koptyug ave. 4, 630090 Novosibirsk, Russia \\
Novosibirsk State University \\
Pirogova str. 2, 630090 Novosibirsk, Russia \\
e-mail: a.panasenko@g.nsu.ru

\end{document}